\def\@tocline#1#2#3#4#5#6#7{\relax
  \ifnum #1>\c@tocdepth % then omit
  \else
    \par \addpenalty\@secpenalty\addvspace{#2}%
    \begingroup \hyphenpenalty\@M
    \@ifempty{#4}{%
      \@tempdima\csname r@tocindent\number#1\endcsname\relax
    }{%
      \@tempdima#4\relax
    }%
    \parindent\z@ \leftskip#3\relax \advance\leftskip\@tempdima\relax
    \rightskip\@pnumwidth plus4em \parfillskip-\@pnumwidth
    #5\leavevmode\hskip-\@tempdima
      \ifcase #1
      \or\or \hskip 2em \or \hskip 2homologyem \else \hskip 3em \fi%
      #6\nobreak\relax
    \dotfill\hbox to\@pnumwidth{\@tocpagenum{#7}}\par
    \nobreak
    \endgroup
  \fi}
\theoremstyle{plain}
\newtheorem{theorem}{Theorem}[section]
\newtheorem{lemma}[theorem]{Lemma}
\newtheorem{corollary}[theorem]{Corollary}
\newtheorem{proposition}[theorem]{Proposition}
\theoremstyle{definition}
\newtheorem{notation}[theorem]{Notation}
\newtheorem{remark}[theorem]{Remark}
\newtheorem{definition}[theorem]{Definition}
\newtheorem{example}[theorem]{Example}
\numberwithin{equation}{section}
\newcommand{\dlim}{\mathop{\varinjlim}\limits}  % direct limit
\newcommand{\inj}{\hookrightarrow}
\newcommand{\tensor}{\otimes}
\newcommand{\Intersection}{\bigcap}
\newcommand{\Ab}{{\rm Ab}}
\newcommand{\Ker}{{\rm Ker \ }}
\newcommand{\Hom}{{\rm Hom}}
\newcommand{\Spec}{{\rm Spec \,}}
\renewcommand{\tilde}{\widetilde}
\newcommand{\nis}{Nis}
\newcommand{\Z}{{\mathbb Z}}
\newcommand{\KMW}{{K}^{\rm MW}}		% Milnor-Witt K-theory
\newcommand{\Pone}{\mathbb P^1}
\newcommand{\A}{\mathbb A}
\newcommand{\w}{\omega}
\newcommand{\calC}{\mathcal C}
\newcommand{\calL}{\mathcal L}
\newcommand{\sL}{\mathcal L}
\newcommand{\calN}{\mathcal N}
\newcommand{\calG}{\mathcal G}
\newcommand{\sO}{\mathcal O}
\newcommand{\sC}{\mathcal C}
\def\<{\langle}
\def\>{\rangle} 
\def\-{\overline} 
\def\~{\widetilde}
\def\^{\widehat}
\def\fr{\mathfrak}
\def\@{\mathcal}
\def\!{\mathscr}
\def\#{\mathbb}
\def\&{\mathbf}
\def\_{\underline}
\def\Dot{\bullet} 
\def\x{\times}
\def\ox{\otimes}
\def\.{\cdot}
\def\Field{\mathcal Field}
\begin{document}

\title{Milnor-Witt cycle modules over an excellent DVR}

\author{Chetan Balwe}
\address{Department of Mathematical Sciences, Indian Institute of Science Education and Research Mohali, Knowledge City, Sector-81, Mohali 140306, India.}
\email{cbalwe@iisermohali.ac.in}
\author{Amit Hogadi}
\address{Department of Mathematical Sciences, Indian Institute of Science Education and Research Pune, Dr. Homi Bhabha Road, Pashan, Pune 411008, India.}
\email{ amit@iiserpune.ac.in}
\author{Rakesh Pawar}
\address{Department of Mathematical Sciences, Indian Institute of Science Education and Research Pune, Dr. Homi Bhabha Road, Pashan, Pune 411008, India.}
\email{ rakesh.pawar@acads.iiserpune.ac.in}
\date{\today}

\subjclass[2010]{14C17;  14C35; 14F20
%14C35, 14C25, 14F20, 19E15
(Primary)}
\keywords{Cycle modules; Milnor-Witt K-theory}
\thanks{The second author acknowledges the support of India DST-DFG Project on Motivic Algebraic Topology DST/IBCD/GERMANY/DFG/2021/1.}

\begin{abstract} The definition of Milnor-Witt cycle modules in \cite{Feld} can easily be adapted over general regular base schemes. However, there are simple examples (see (\ref{eg1})) to show that Gersten complex fails to be exact for cycle modules in general if the base is not a field. The goal of this article is to show that, for a restricted class of Milnor-Witt cycle modules over an excellent DVR satisfying an extra axiom, called here as R5, the expected properties of exactness of Gersten complex and $\A^1$-invariance hold. Moreover R5 is vacuously satisfied when the base is a field and it is also satisfied by $\KMW$ over any base. As a corollary, we obtain the strict $\A^1$-invariance and the exactness of Gersten complex for $\KMW$ over an excellent DVR. 
\end{abstract}

\maketitle
%\tableofcontents

\section{Introduction}

Rost's theory of cycle modules was generalized to Milnor-Witt cycle modules (or MW-cycle modules) by N. Feld in \cite{Feld}. With minor modifications and verifications, this definition  makes sense as it is, over a regular excellent base scheme. 

Over a field, an MW-cycle module $M$ satisfies two additional properties (see \cite[8.1, 9.1]{Feld}) implied by the axioms, viz. local acylicity and homotopy invariance. The definition of a cycle module makes it possible to write a Gersten complex for it over any scheme equipped with a graded line bundle. Local acyclicity is a consequence of exactness of this Gersten complex in positive degrees.
For a MW-cycle module over a positive dimensional base scheme, the Gersten complex may not be exact (see Example \ref{eg1}) in positive degrees. We introduce an additional axiom, called R5, to remedy this situation. The axiom R5 says (see Definition \ref{R5}) that for a field $F$ and finitely many horizontal valuations (see Definition \ref{horizontal}) $v_1,..,v_n$, the sum of residue morphisms 
$$ M(F, *) \xrightarrow{\sum_{i=1}^n \partial_{v_i}} \bigoplus_{i=1}^n M(\kappa(v_i),*)$$ 
is surjective. 

Given a cycle module $M$ and a graded line bundle $\calL$ on a scheme $X$, one has a naturally defined Nisnevich sheaf, $M(X, \calL)$ (see \eqref{cmsheaf}). The following is one of the main results of this paper:

\begin{theorem}[Local acyclicity]\label{main}Let $S$ be the spectrum of an excellent DVR. Let $M$ be a MW-cycle module on $S$ satisfying the property R5 (see Definition~\ref{R5}).
Let $X$ be an essentially smooth semi-local $S$-scheme and  $\calL$ be a graded line bundle on $X$,  then the Gersten complex $C^{\bullet}(X,M,\sL)$ is exact in degrees $\geq 1$. 
In particular, $$H^i_{Nis}(X,M(X,\sL)) = 0 \ \forall \ i>0.$$ 
\end{theorem}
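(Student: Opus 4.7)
The plan is to reduce the statement to Feld's local acyclicity theorem over a field by stratifying $X$ along the generic and special fibers of $S$, with axiom R5 entering precisely at the connecting homomorphism between the two strata.

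After passing to connected components we may assume $X$ is integral. Since $X$ is essentially smooth (hence flat) over the regular base $S$, $X$ dominates $S$ and the special fiber $X_s := X\times_S s$ is a smooth Cartier divisor in $X$. Hence codimension-$i$ points of $X$ decompose as $X^{(i)} = X_\eta^{(i)} \sqcup X_s^{(i-1)}$ for $i \geq 1$, where $X_\eta := X\times_S\eta$. This stratification gives rise to a short exact sequence of Gersten complexes
$$
0 \to C^{\bullet-1}(X_s, M, \sL|_{X_s}) \xrightarrow{\iota_*} C^\bullet(X, M, \sL) \xrightarrow{\jmath^*} C^\bullet(X_\eta, M, \sL|_{X_\eta}) \to 0,
$$
where $\iota_*$ is pushforward along the closed immersion $\iota\colon X_s \hookrightarrow X$ (with a twist of $\sL$ by the conormal bundle) and $\jmath^*$ is restriction to the open complement; both are chain maps by the MW-cycle module axioms.

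Both $X_\eta$ and $X_s$ are semi-local essentially smooth schemes over fields (namely $\kappa(\eta)$ and $k = \kappa(s)$), so Feld's local acyclicity theorem over a field gives
$$
H^i\big(C^\bullet(X_\eta, M, \sL|_{X_\eta})\big) = H^i\big(C^\bullet(X_s, M, \sL|_{X_s})\big) = 0 \quad \text{for all } i\geq 1.
$$
The associated long exact sequence forces $H^i(C^\bullet(X,M,\sL)) = 0$ automatically for $i \geq 2$, and for $i = 1$ yields the four-term exact sequence
$$
H^0\big(C^\bullet(X_\eta)\big) \xrightarrow{\partial} H^0\big(C^\bullet(X_s)\big) \longrightarrow H^1\big(C^\bullet(X,M,\sL)\big) \longrightarrow 0,
$$
so the problem reduces to showing that the connecting homomorphism $\partial$ is surjective.

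A snake-lemma unravelling identifies $\partial$ with the map sending an $X_\eta$-unramified class $\alpha \in M(K(X),\sL)$ to the tuple of its residues at the codimension-one points of $X$ lying on $X_s$, i.e.\ at the horizontal valuations on $K(X)$ in the sense of Definition \ref{horizontal}. Axiom R5 applied to $F = K(X)$ and this finite collection of horizontal valuations directly produces a class in $M(K(X),\sL)$ realizing any prescribed tuple in $\bigoplus_{x\in X_s^{(0)}} M(\kappa(x),\sL)$. The principal technical obstacle is that such an R5-lift need not be $X_\eta$-unramified: one must absorb any unwanted residues at points of $X_\eta^{(1)}$ by a correction term, which I expect to construct by combining a second application of R5 with Feld's Gersten exactness over $\kappa(\eta)$ so as to kill off the $X_\eta^{(1)}$-residues without disturbing the horizontal ones already prescribed. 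Once $\partial$ is surjective, the final Nisnevich-cohomology statement follows by the standard Colliot-Th\'el\`ene-Hoobler-Kahn argument: local acyclicity on all essentially smooth semi-local $S$-schemes forces $C^\bullet(X,M,\sL)$ to be a Nisnevich-locally acyclic resolution of $M(X,\sL)$.
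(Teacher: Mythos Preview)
Your stratification is exactly the spectral sequence of Lemma~\ref{spectralseq}, which the paper uses for $\A^1$-invariance but \emph{not} for local acyclicity---and for good reason. There are two genuine gaps. First, $X_\eta$ is not semi-local: when the closed points of $X$ lie over $s$, the closed points of $X_\eta$ are the codimension-one points of $X$ flat over $S$, of which there are typically infinitely many (e.g.\ for $X=\Spec R[t]_{(t,\pi)}$ one gets all horizontal curves through the origin of the special fiber). So Feld's semi-local acyclicity over $\kappa(\eta)$ does not apply to $X_\eta$ as stated. Second, and more seriously, your correction scheme for the surjectivity of $\partial$ is circular. Given $\beta\in A^0(X_s)$, R5 yields $\alpha_0$ with the prescribed horizontal residues; its $X_\eta^{(1)}$-residues form a cycle $\gamma$, and writing $\gamma=d_{X_\eta}(\alpha_1)$ gives $\alpha_0-\alpha_1\in A^0(X_\eta)$ with $\partial(\alpha_0-\alpha_1)=\beta-\delta$, where $\delta$ is the tuple of horizontal residues of $\alpha_1$---over which you have no control. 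R5 controls horizontal residues but not vertical ones; Gersten exactness over $\kappa(\eta)$ does the reverse; nothing here gives both simultaneously, and killing $\delta$ is precisely the surjectivity of $\partial$ you are trying to prove.

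The paper's proof avoids this entirely and does not reduce to the field case. Following Rost and L\"uders, it uses a lemma of Saito--Sato together with R5 to show that every class in $A^p$ is represented by a cycle supported on a closed subscheme $Y$ \emph{flat} over $S$; then a Gillet--Levine geometric presentation (Lemma~\ref{gillet-levine}) feeds a Rost-style homotopy (Proposition~\ref{locvanishforMW}) showing that $A^p_Y(X')\to A^p(X')$ vanishes on a suitable neighbourhood $X'$. Here R5 is used only to move support onto flat subschemes; the geometric presentation does the real work, and Feld's field-case acyclicity is never invoked.
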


\noindent  The requirement for this additional axiom R5 can be justified as follows. Whenever the field $F$ appearing in the condition R5 is the function field of a smooth $S$-scheme, R5 is exactly equivalent to the exactness of Gersten complex for a suitable semi-local one dimensional scheme. In particular, R5 is motivated from a property always satisfied by MW-cycle modules over a perfect field (see Proposition \ref{r5field} for an elementary argument). Theorem \ref{main} may be loosely paraphrased by saying that exactness of Gersten complex for general semi-local essentially smooth schemes over a DVR follows from local acyclicity in the  one-dimensional case. However, this should be taken with the caveat that R5 only refers to horizontal valuations and it is applied to fields $F$ which are not necessarily function fields of smooth schemes. The strategy of the proof is motivated by \cite[Theorem 3.1]{lueders20}, which is similar to the original argument in \cite[\S 6]{Rost}, and is also used in \cite[8.1]{Feld}. 

\begin{remark}
Note that the vanishing in Theorem \ref{main} implies that the Zariski and the Nisnevich cohomology groups of $M(X,\sL)$ agree. 
\end{remark}

In the last section, we show (see Theorem \ref{a1inv}) that strict $\A^1$-invariance holds for MW-cycle modules satisfying R5. 
\begin{theorem}\label{a1inv} Let $X$ be a smooth $S$-scheme. Then for all $i\geq 0$,
$$H^i_{Nis}(X, M(X,\sL))\simeq  H^i_{Nis}(\A^1_X,M(\A^1_X, \sL \tensor (\w_{\A^1_X/X}^{\vee}, -1))).$$ 
\end{theorem}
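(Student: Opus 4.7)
\medskip

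\noindent \textbf{Proof plan.} By Theorem~\ref{main}, for any smooth $S$-scheme $Y$ and any graded line bundle $\sL'$ on $Y$, the sheafified Gersten complex of $M$ on $Y$ is a flasque resolution of $M(Y,\sL')$ on the small Nisnevich site of $Y$. Therefore $H^i_{Nis}(Y,M(Y,\sL'))$ is computed as the $i$-th cohomology of the global Gersten complex $C^\bullet(Y,M,\sL')$. Applying this in turn to $Y=X$ and to $Y=\A^1_X$, Theorem~\ref{a1inv} reduces to the assertion that the flat pullback along the projection $\pi:\A^1_X\to X$ induces a quasi-isomorphism
\[
\pi^{*}:\ C^\bullet(X,M,\sL)\ \longrightarrow\ C^\bullet\bigl(\A^1_X,\ M,\ \sL\tensor(\w_{\A^1_X/X}^{\vee},-1)\bigr)
\]
of complexes of abelian groups.

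To establish this quasi-isomorphism, I would mimic the strategy used by Rost in \cite{Rost} and adapted for MW-cycle modules over a field by Feld in \cite[\S 9]{Feld}. First, the zero section $s:X\hookrightarrow \A^1_X$ is a regular closed immersion of codimension one with canonically trivialised normal bundle (given by $\partial/\partial t$). The compatibility of $\pi^{*}$ with $s^{*}$ in the cycle-module structure yields $s^{*}\circ \pi^{*}=\mathrm{id}$ on $C^\bullet(X,M,\sL)$. Hence $\pi^{*}$ is a split monomorphism of complexes, and in particular it is injective on cohomology.

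It then remains to construct a chain homotopy $h^\bullet$ between $\pi^{*}\circ s^{*}$ and the identity on $C^\bullet(\A^1_X,M,\sL\tensor(\w_{\A^1_X/X}^{\vee},-1))$. The classical construction uses $\A^2_X$ with coordinates $(t,u)$ and the principal divisor $V(tu-1)\subset\A^2_X$: one defines $h^{p}$ as a suitable composition of the flat pullback to $\A^2_X$, the residue along $V(tu-1)$, and the proper push-forward back to $\A^1_X$ via the second projection. The identity $dh+hd=\mathrm{id}-\pi^{*}s^{*}$ is then verified axiom by axiom from the compatibilities between pullback, push-forward and residues in an MW-cycle module, keeping careful track of the line-bundle twists $\w_{\A^2_X/X}^\vee$ and the $\mathrm{MW}$-grading shifts.

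The main obstacle is that the verification of the homotopy identity rests on residue computations performed on codimension-one points of $\A^2_X$. Over a DVR base, such points come in two flavours: horizontal ones, dominating either the generic or the closed point of $S$, and the vertical one coming from the uniformizer of $S$. To reduce the identity to a symbol-level computation in the residue fields one needs exactness of the Gersten complexes for semi-local schemes on $\A^2_X$ involving both flavours simultaneously. This exactness is precisely what Theorem~\ref{main} (via axiom R5) delivers, and once granted, Feld's argument transcribes essentially verbatim, the sole additional bookkeeping being the handling of the twists $(\w_{\A^i_X/X}^{\vee},-i)$ for $i=1,2$. Combined with the splitting established in the second paragraph, this yields the desired quasi-isomorphism and hence the theorem.
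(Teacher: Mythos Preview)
Your plan is viable in spirit but takes a different route than the paper, and your diagnosis of where the difficulty lies is off.

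The paper does \emph{not} build a chain homotopy on $C^\bullet(\A^1_X)$. Instead, it filters the Gersten complex by the codimension of the image in $S$ (Lemma~\ref{spectralseq}), obtaining a spectral sequence
\[
E_1^{pq}=\bigoplus_{s\in S^{(p)}}A^q(X_s,M,\calL_{X_s})\ \Longrightarrow\ A^{p+q}(X,M,\calL),
\]
and likewise for $\A^1_X$. The pullback $\pi^*$ is a map of filtered complexes, and on each $E_1$-term it becomes $A^q(X_s,M,*)\to A^q(\A^1_{X_s},M,*)$ with $X_s$ smooth over the \emph{field} $\kappa(s)$. This is an isomorphism by the field case (Proposition~\ref{hwr}(H) feeding into Feld's argument \cite[Theorem~9.4]{Feld}), so the spectral sequences agree from $E_1$ on and the abutments coincide. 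Theorem~\ref{main} enters only to identify $A^*$ with $H^*_{\nis}$, not in the homotopy-invariance step itself.

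Your direct-homotopy strategy can also be made to work, but the justification you give is misleading. The identity $dh+hd=\mathrm{id}-\pi^*s^*$ in Rost's construction is a \emph{termwise} verification using only the axioms D1--D4, R1--R4, and the property (H) applied to each residue field $\kappa(x)$ for $x\in\A^1_X$; it does not require any Gersten exactness on $\A^2_X$, and the ``two flavours of points'' over a DVR play no special role in that computation. So R5 and Theorem~\ref{main} are not needed for the homotopy step---only for the identification with Nisnevich cohomology, exactly as in the paper. Separately, your specific recipe using the divisor $V(tu-1)\subset\A^2_X$ is not the standard Rost homotopy (which rather uses the projection $\A^1_X\times\A^1\to\A^1_X$ and a section to peel off (H) fiberwise); you would need to check carefully that residues along $V(tu-1)$ followed by push-forward really yields $\mathrm{id}-\pi^*s^*$ up to boundary. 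The paper's spectral-sequence route avoids this bookkeeping entirely by reducing at once to the already-established field case.
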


\noindent The above theorem is deduced from local-acyclicity and $\A^1$-invariance in the field case using a spectral sequence argument.  The main example of a MW-cycle module is $\KMW$ itself. 
 \begin{theorem}\label{kmwcheck} Let $S$ be a regular excellent scheme. Then 
 $\KMW$ is a MW-cycle module over $S$ satisfying the property R5.
 \end{theorem}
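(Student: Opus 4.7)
The plan is to prove the two assertions separately: first, that $\KMW$ satisfies the MW-cycle module axioms over $S$; second, that it satisfies R5. The first is a formal extension of the field case of \cite{Feld}, and the second is a short argument via weak approximation.

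For the MW-cycle module structure, the data assigns to each field $F$ over $S$ the Milnor--Witt K-ring $\KMW_*(F)$, to a finite separable extension $E/F$ the MW-transfer, to a unit $u \in F^\times$ the class $[u]$, and to every discrete valuation on $F$ the classical MW-residue $\partial_v$. Every structural axiom of an MW-cycle module is a universal identity inside $\KMW_*$ of one or two fields, and Morel and \cite{Feld} have verified these over a base field. The proofs go through verbatim for a field over $S$, whether the relevant valuation is horizontal or vertical (i.e.\ attached to the uniformizer of $S$); in the vertical case, one uses the standard MW-residue of a complete DVR after completing. The finite-support condition is automatic, since a nonzero element of $\KMW_*(F)$ has non-vanishing residue at only finitely many valuations of $F$, and on an essentially smooth $S$-scheme any element at the generic point extends to almost all codimension-one points.

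For R5, let $F$ be a field with horizontal discrete valuations $v_1, \ldots, v_n$, and let $\alpha_i \in \KMW_*(\kappa(v_i))$ be given. By the Artin--Whaples weak approximation theorem choose $\pi_i \in F^\times$ with $v_i(\pi_i) = 1$ and $v_j(\pi_i) = 0$ for $j \ne i$. Each $\alpha_i$ is a finite sum of monomials $\eta^{m}[\bar u_1]\cdots[\bar u_r]$ with $\bar u_s \in \kappa(v_i)^\times$. Applying weak approximation once more, lift each $\bar u_s$ to $u_s \in F^\times$ which reduces to $\bar u_s$ modulo $v_i$ and is a $v_l$-unit for every $l$. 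This produces a lift $\tilde\alpha_i \in \KMW_*(F)$ of $\alpha_i$, and we set
\[
\beta \;=\; \sum_{i=1}^n [\pi_i] \cdot \tilde\alpha_i \;\in\; \KMW_{*+1}(F).
\]
For $j \ne i$, the term $[\pi_i] \cdot \tilde\alpha_i$ is a sum of symbols each of whose entries is a unit at $v_j$, so its $v_j$-residue vanishes. For $j = i$, the classical formula $\partial_{v_i}([\pi_i][u_1]\cdots[u_r]) = [\bar u_1]\cdots[\bar u_r]$, together with $\partial_{v_i}$-linearity in $\eta$, gives $\partial_{v_i}([\pi_i]\cdot\tilde\alpha_i) = \alpha_i$. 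Hence $\partial_{v_i}(\beta) = \alpha_i$ for all $i$, proving surjectivity of the total residue map.

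The anticipated main obstacle is not R5, which is clean, but the careful spelling-out of the double-residue compatibility axiom (R3 in Rost's numbering) when one of the two residues is vertical, coming from the uniformizer of $S$. Once this identity is established by passage to the completion at a generic point of the special fibre, all other MW-cycle module axioms are inherited immediately from their field versions as in \cite{Feld}, and the theorem follows by combining with the R5 computation above.
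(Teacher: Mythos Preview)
Your R5 argument is correct and essentially the same as the paper's: the paper fixes one valuation at a time and chooses a single uniformizer $\pi$ with $\pi\equiv -1$ at the remaining places, whereas you pick separate $\pi_i$ that are units at the other places; both variants work for the same reason, namely that $\partial_v$ vanishes on symbols of $v$-units and commutes with $\eta$.

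There is, however, a genuine gap in the cycle-module part. The pre-cycle module axioms R0--R4 and (FD) do indeed carry over verbatim from the field case, and the paper says exactly this. But the closedness axiom (C) of Definition~\ref{CM} is \emph{not} ``a universal identity inside $\KMW_*$ of one or two fields'': it asserts that for every integral local $S$-scheme $X$ of dimension $2$ the composite $d\circ d:M(\xi,\calL)\to M(x_0,\calL)$ vanishes, and this is a sum over all codimension-one points of $X$. When $S$ has positive dimension, such an $X$ can be mixed, with both horizontal and vertical height-one primes, and nothing in Feld's or Morel's field-based verification covers this. Your sentence ``once this identity is established by passage to the completion at a generic point of the special fibre'' does not supply a proof: completing at a codimension-one point yields a DVR, not a two-dimensional local ring, while completing at the closed point still leaves the entire two-dimensional reciprocity to be checked. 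You have correctly identified where the difficulty lies but have not resolved it.

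The paper spends Steps~2--6 on exactly this point, following Kato's argument for Milnor $K$-theory. One reduces to $X=\Spec A$ with $A$ a complete two-dimensional local domain, uses Cohen's structure theorem to realise $A$ as finite over $\Lambda[[t]]$ for a complete DVR $\Lambda$, describes the height-one primes via Weierstrass preparation, and then checks $d\circ d=0$ on an explicit set of generators of $\KMW_m(K)$ (separately for $m\geq 1$ and $m\leq 0$) by invoking the Weil reciprocity law for $\KMW$ over $\mathbb{P}^1_L$ from \cite{morel}. Something of this shape is unavoidable; your proposal needs to incorporate it.
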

As a corollary of the above results, we get\begin{corollary}\label{kmwa1}
Over an excellent DVR, the Milnor-Witt sheaves $K_n^{MW}$ are strictly $\A^1$-invariant. 
\end{corollary}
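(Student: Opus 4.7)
The plan is to deduce this corollary directly from Theorems \ref{kmwcheck} and \ref{a1inv}. By Theorem \ref{kmwcheck}, $\KMW$ is a MW-cycle module over the excellent DVR $S$ satisfying axiom R5, which places it within the scope of Theorem \ref{a1inv}.

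Fix a smooth $S$-scheme $X$ and an integer $n$, and choose a graded line bundle $\sL$ on $X$ so that the degree-$n$ component of the associated Nisnevich sheaf $M(X,\sL)$ is $K_n^{MW}$. Theorem \ref{a1inv} then gives, for every $i \geq 0$, an isomorphism
\begin{equation*}
H^i_{Nis}(X, K_n^{MW}) \simeq H^i_{Nis}(\A^1_X, M(\A^1_X, \sL \tensor (\w_{\A^1_X/X}^{\vee}, -1)))
\end{equation*}
induced by pullback along $p : \A^1_X \to X$. The key observation is that $\w_{\A^1_X/X}$ is canonically trivialised by $dt$, so after applying this trivialisation and absorbing the grading shift of $-1$ into the indexing of Milnor-Witt K-theory, the right-hand side is identified with $H^i_{Nis}(\A^1_X, K_n^{MW})$.

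Consequently $p^*: H^i_{Nis}(X, K_n^{MW}) \to H^i_{Nis}(\A^1_X, K_n^{MW})$ is an isomorphism for all $i \geq 0$ and all smooth $X/S$, which is precisely strict $\A^1$-invariance of the Nisnevich sheaf $K_n^{MW}$. The substantive content has been carried out in Theorems \ref{kmwcheck} and \ref{a1inv}; the main (and mild) obstacle here is the bookkeeping needed to match $M(\A^1_X, \sL \tensor (\w_{\A^1_X/X}^{\vee}, -1))$ with $K_n^{MW}$ on $\A^1_X$ under the canonical trivialisation of $\w_{\A^1_X/X}$, which amounts to tracking the orientation conventions built into Feld's definition of a MW-cycle module rather than any new input.
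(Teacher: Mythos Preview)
Your proposal is correct and follows exactly the route the paper intends: the paper offers no separate proof of Corollary~\ref{kmwa1}, simply presenting it as a direct consequence of Theorems~\ref{kmwcheck} and~\ref{a1inv}, and your argument spells out precisely that deduction. The only imprecision is the phrase ``degree-$n$ component of $M(X,\sL)$'' (the sheaf is not graded once $\sL$ is fixed); what you want is the specific choice $\sL=(\omega_{X/S}^{\vee},\,n-\dim X)$, but you clearly have the right idea and correctly flag the remaining bookkeeping as routine.
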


\begin{remark}
The entire argument of \eqref{kmwa1} and also analogues of Theorems \ref{main} and \ref{a1inv} also work with MW-cycle modules replaced by cycle modules as defined by Rost using Milnor K-theory. In particular the arguments in this paper also show that the Milnor K-theory sheaves $K^M_n$, are strictly $\A^1$-invariant. 
When $S$ is the spectrum of a field $k$, (not necessarily perfect), the strict $\A^1$-invariance for $K_n^M$ is proved in~\cite[Proposition 8.6]{Rost}. When $k$ is a perfect field, strict $\A^1$-invariance of $\KMW_n$ was shown in~\cite[Theorem 5.38]{morel}.  
\end{remark} 

\noindent {\bf Acknowledgement}: We thank Anand Sawant for useful comments and suggestions. The third author acknowledges the support of NBHM Postdoctoral Fellowship and IISER Pune.

\section{Preliminaries on cycle modules over a base}

Let $S$ be a regular excellent scheme (see \cite[\href{https://stacks.math.columbia.edu/tag/07P7}{Tag 07QT}]{stacksproj}). Let $\Field/S$ be the category of finitely generated $S$-fields. The assumption on $S$, allows us to define, for every $F/S \in \Field/S$, the canonical invertible sheaf $\omega_{F/S}$ following \cite[6, Definition 4.3]{liu} (see Definition \ref{canonicalsheaf}). We first recall the definition of  Milnor-Witt cycle modules over $S$, exactly as in \cite[Section 3]{Feld}. The only minor difference is that we use graded line bundles instead of virtual vector bundles for coefficients. This results in minor straightforward changes, for e.g. in the definition of pull back via smooth map (see Definition \ref{maps}(2)) we use canonical sheaf instead of sheaf of differentials.  By \cite[Lemma 3.1.3]{feld21}, using graded line bundles does not amount to loss of generality while at the same time this enables us to talk about cycle modules over imperfect fields.

\begin{definition}\label{horizontal}
For an $S$-field $F$, a valuation $v$ on $F$ is called an $S$-valuation 
if ${\rm Im}( \sO(S)\to F) \subset \sO_v$. One can show that since $S$ is universally Japanese, this condition is equivalent to $\sO_v$ being a localization of a finite type normal $S$-scheme. We say that the valuation $v$ is \textit{horizontal} if the images of $\Spec(F)$ and $\Spec(\kappa(v))$ in $S$ are distinct. 
\end{definition}

We first consider the category $\fr F_S$, defined as follows:
\begin{enumerate}
\item The objects of $\fr F_S$ are pairs $(F, \calL_F)$ where $F$ is a finitely generated field over $S$ and $\calL_F=(L, n)$ is a graded line bundle on $F$, i.e. a pair consisting of a one-dimensional $F$-vector space $L$ and an integer $n$ (see Appendix). 
\item A morphism $\phi: (E, \calL_E) \to (F, \calL_F)$ is a pair consisting of a homomorphism $E \to F$ and an isomorphism $\calL_F \cong \calL_E \otimes_E F$. Such a morphism is said to be a \emph{finite extension} if the homomorphism $E \to F$ is a finite extension. 
\end{enumerate}
We denote, as in  \cite{Feld}:
$$ \KMW(E,  \calL_E):= \KMW_n(E)\tensor_{\Z[E^{\x}]} {\Z[L^{\x}]} $$ where $\calL_E = (L,n).$
To recall the definition of Milnor-Witt cycle modules following  \cite[1.1]{Deglise} and \cite[3.1]{Feld} we first construct the category  $\widetilde{\fr F}_S$, having the same objects as $\fr F_S$, but the morphisms of which are defined via the following generators and relations. We freely use notation from \cite[2.2]{Feld}.

\noindent {\bf Generators}: 
\begin{itemize}
\item[{\bf D1}] $\phi_*: (E, \calL_E)\to (F, \calL_F)$ for any $\phi: (E, \calL_E) \to (F, \calL_F)$ in $\fr F_S$. 
\item[{\bf D2}] $\phi^*: (F, \w_{F/S}\otimes\calL_F)\to (E, \w_{E/S}\otimes\calL_E)$ for any finite extension $\phi: (E, \calL_F) \to (F, \calL_F)$ in $\fr F_S$.
\item[{\bf D3}] $\gamma_x: (E, \calL_E)\to (E, \calL'_E\otimes\calL_E)$ for $x\in  \KMW(E,  \calL'_E)$.
\item[{\bf D4}] $\partial_v: (F, \calL_F)\to (\kappa(v), \calN_v^{\vee}\otimes \calL_{\kappa(v)})$ where 
	\begin{enumerate}
	\item[(i)]  $F\in \Field/S$ and $v$ is a discrete $S$-valuation on $F$ over $S$.
	\item[(ii)] $\calL$ is a graded line bundle on $\sO_v$, such that $\calL_F=\calL\tensor_{\sO_v}F$ and $\calL_{\kappa(v)} = \kappa(v)\tensor_{\sO_v}\calL$.
	\item[(iii)] $\calN_v:= (\fr m_v/\fr m_v^2,1)$.
	\end{enumerate}	
\end{itemize}

\noindent {\bf Relations}:

\begin{enumerate}
\item[\bf{R0}] For $x\in  \KMW(E,  \calL_E)$ and $y\in  \KMW(E,  \calL'_E)$, $\gamma_x\circ\gamma_y=\gamma_{x\.y}$.
\item[\bf{R1a}] $(\psi\circ\phi)_*=\psi_*\circ\phi_*$ for morphisms $\phi$ and $\psi$ in $\fr F_S$. 
\item[\bf{R1b}] $(\psi\circ\phi)^*=\phi^*\circ\psi^*$ for morphisms $\phi$ and $\psi$ in $\fr F_S$. 
\item[\bf{R1c}] Let $\phi: (F, \calL_F)\to (E, \calL_E)$ and $\psi: (E, \calL_E)\to (L, \calL_L)$ be in $\fr F_S$ such that with $E \to F$ is finite and $E\to L$ is separable. Let $R=E\otimes_L F$. For each $p\in \Spec R$, let $\phi_p: (L, \calL_L)\to (R/p, \calL_{R/p})$ and $\psi_p: (F, \calL_F)\to (R/p, \calL_{R/p})$ be morphisms in $\fr F_S$ induced by $\phi$ and $\psi$. Then
$$ \psi_*\circ\phi^*=\sum_{p\in \Spec R} \phi_p^*\circ (\psi_p)_*.$$

\item[\bf{R2}] Let $\phi: (E, \calL_E)\to (F, \calL_F)$ be in $\fr F_S$, $x\in  \KMW(F, \w_{F/S}\otimes \calL_F)$ and $y\in  \KMW(E,  \calL'_E)$. Then the following relations hold:
\begin{enumerate}
\item[\bf{R2a}] $\phi_*\circ \gamma_x=\gamma_{\phi_*(x)}\circ \phi_*$.
\item[\bf{R2b}] For $\phi$ finite,  $\phi^*\circ \gamma_{\phi_*(x)}=\gamma_x\circ \phi^*$.
\item[\bf{R2c}] For $\phi$ finite,  $\phi^*\circ \gamma_y\circ \phi_*=\gamma_ {\phi^*(y)}$.
\end{enumerate}
\item[\bf{R3a}] Let $\phi: E\to F$ be a field extension and $w$ be a valuation on $F$ which restricts to a non trivial valuation $v$ on $E$ with ramification index $e$. Let $\calL$ be a graded line bundle on $\sO_v$, so that we have a morphism $\phi : (E, \calL_E )\to (F, \calL_F)$ which induces a morphism $\-{\phi}: (\kappa(v), -\calN_v \otimes \calL_{\kappa(v)}) \to (\kappa(w), -\calN_w\otimes\calL_{\kappa(w)})$. Then
$$ \partial_w\circ \phi_*=\gamma_{e_{\epsilon}} \circ \-{\phi}_*\circ \partial_v.$$
\item[\bf{R3b}] Let $E\to F$ be a finite extension of fields, let $v$ be a valuation on $E$ and let $\calL$ be a graded line bundle on $\sO_v$. For each extension $w$ of $v$, we denote by $\phi_w : (\kappa(v),\calL_{\kappa(v)})\to (\kappa(w), \calL_{\kappa(w)})$ the morphism induced by $\phi: (E, \calL_E )\to (F, \calL_F)$. We have
$$ \partial_v\circ \phi^*=\sum_w (\phi_w)^*\circ \partial_w.$$
\item[\bf{R3c}] Let $\phi: (E, \calL_E)\to (F, \calL_F)$ be a morphism in $\fr F_S$. Let $w$ be a valuation on $F$ that is trivial on $E$. Then $$\partial_w\circ \phi_*=0.$$
\item[\bf{R3d}] Let $\phi$ and $w$ be as in R3c, and let $\-\phi : (E, \calL_E)\to (\kappa(w),\calL_{\kappa(w)})$ be the induced morphism. For any uniformizer $\pi$ of $v$, we have
$$\partial_w\circ \gamma_{[-\pi]}\circ \phi_*=\-{\phi}_*.$$
\item[\bf{R3e}] Let $E$ be a field over $S$, $v$ be a valuation on $E$ and $u\in\sO_v^{\x}.$ Then
$$\partial_v\circ\gamma_{[u]}=\gamma_{\epsilon[\-u]}\circ\partial_v$$ and
$$\partial_v\circ\gamma_{\eta}=\gamma_{\eta}\circ \partial_v.$$
\item[\bf{R4a}] Let $(E, \calL_E)$ and let $\Theta$ be an endomorphism of $(E, \calL_E)$ given by an automorphism of $\calL_E$. Denote
by $\Delta$ the canonical map from the group of automorphisms of $\calL_E$ to the group $\KMW(E,0)$. Then $\Theta_*= \gamma_{\Delta(\Theta)} : (E, \calL_E)\to (E, \calL_E)$.
\end{enumerate}

\begin{definition}
A MW-pre-cycle module $M$ over $S$ is a covariant functor $M: \~{\fr F}_S\to \Ab$, to the category $\Ab$ of abelian groups. A morphism of pre-cycle modules is a natural transformation between the corresponding functors. We will denote the maps $\partial_v, \gamma_x, \phi^*, \phi_*$ etc. for $M(\partial_v), M(\gamma_x), M(\phi^*), M(\phi_*)$ respectively throughout the paper to ease the notation. 
\end{definition}

\begin{definition} Let $M$ be a pre-cycle module over $S$,  $X$ a scheme over $S$ and $\calL$ a graded line bundle $X$.
 \begin{enumerate}
 \item For a point $x\in X$, $M(x, \calL_X)=M(\kappa(x), \Lambda_{x}\otimes\calL_X)$ 
 where $\Lambda_x:=(\w_{x/S}, \dim \-{\{x\}})$ and $\omega_{x/S}$ is the canonical bundle (see \ref{canonicalsheaf}).
 \item Suppose $X$ is normal,  $\xi_X$ is the generic point of $X$ and $z$ is a codimension 1 point of $X$, then $$\partial^{\xi_X}_z:=\partial_{v_z}: M(\xi_X, \calL_X)\to M(\kappa(z), \calL_X).$$
 \item Let $x\in X$ and let $Z$ be the normalization of the reduced closed subscheme $\-{\{x\}}_{red}$.  Let $\{z_1, \cdots, z_r\}\subset Z^{(1)}$, where each $z_i\mapsto y$ under the map $Z\to X$ and $$\phi_{z_i}: (\kappa(y), \calL_X) \to (\kappa(z_i), \calL_X).$$ We define $\partial^x_y: M(x, \calL_X)\to M(y, \calL_X) $ as follows 
 \begin{align}\partial^x_y:=\begin{cases} \sum_{z_i}\phi_{z_i}^*\circ \partial^x_{z_i} & \text{if $y\in\-{\{x\}}$,}\\
      0            & \text{if $y\notin\-{\{x\}}$.}
 \end{cases}
 \end{align}

 \end{enumerate}

\end{definition}
\begin{definition}\label{CM} \cite[Definition 4.2]{Feld}
A MW-pre-cycle module $M$ over $S$ is a MW-cycle module if it satisfies the following two properties:
\begin{enumerate}
\item[(FD)] Finite support of divisors: Let $X$ be a normal scheme, $\calL_X$ be a graded line bundle over $X$ and $\rho$ be an element of $M(\xi_X, \calL_X)$. Then $\partial_x(\rho) = 0$ for all but finitely many $x \in X^{(1)}$.

\item[(C)] Closedness: Let $X$ be an integral $S$-scheme which is local of dimension 2 and $\calL_X$ be a graded line bundle over $X$. Then $$0=d\circ d=\sum_{}\partial^x_{x_0}\circ\partial^{\xi}_x: M(\xi,\calL_X)\to M(x_0, \calL_X)$$
\end{enumerate}
where $\xi$ is the generic point and $x_0$ is the closed point of $X$.
\end{definition}

\begin{definition}\label{cyclecomplex} \cite[7.1]{Feld} Let $M$ be a 
 MW-cycle module $M$ on $S$. Let $X/S$ be any scheme which is equi-dimensional of dimension $d$. Let $\sL$ be a graded line bundle on $X$. One defines a Gersten complex $C^{\Dot}(X, M, \calL)$ by 
$$C^p(X, M, \calL)=\bigoplus_{x\in X^{(p)}}M(x, \calL)$$
where $X^{(p)}$ denotes the set of points of codimension $p$ in $X$. As in \cite[7.1]{Feld}, when $X$ is not equi-dimensional one can also define a homological version of the Gersten complex $C_{\Dot}(X, M, \calL)$ indexed by the dimension of the points instead of the codimension. However, throughout this paper we only use the cohomological indexing. We let 
$A^p(X,M, \sL)$ denote the $p$-th cohomology of $C^{\Dot}(X, M, \calL)$.

Let $Z$ be a closed subset of $X$ and $U:=X-Z\xrightarrow{j}X$, the open immersion of $U$ in $X$, then the Gersten complex for $M$ on $X$  with support on $Z$ is given by  

$$C^{\Dot}_Z(X, M, \calL):=\Ker\bigg(C^{\Dot}(X, M, \calL)\xrightarrow{j^*} C^{\Dot}(U, M, \calL)\bigg).$$

$A^p_Z(X,M, \sL)$ denotes the $p$-th cohomology of the complex $C^{\Dot}_Z(X, M, \calL).$
\end{definition}

\begin{definition}\label{cmsheaf}
Given a MW-cycle module $M$ over $S$ and a graded line bundle $\calL$ on a smooth $S$-scheme $X$, we have a naturally defined Zariski sheaf (see \cite[8.5]{Feld}) $M(X,\calL)$ on $X$ given by 
$$ U\mapsto A^0(U,M,\sL).$$
It is left to the reader to check that R3a implies that this is in fact a Nisnevich sheaf on $X$. \end{definition}

\begin{remark}
Given MW-cycle module $M$ on $S$ and a morphism $f:Y\to X$ of smooth $S$-schemes one shows that for a graded line bundle $\sL$ on $S$, we have using R3a a well defined map 
$ A^0(X,M,\sL_X) \to A^0(Y,M,\sL_Y)$. As in \cite[Theorem 5.3.1]{feld21}, one can easily show that $A^0(-,M,\sL)$ is a birational invariant for smooth proper $S$-schemes. 
\end{remark}

\begin{proposition}\label{hwr}
Let $M$ be a MW-cycle module on $S$ and $\sL$ a graded line bundle on $S$. Then $M$ satisfies the following two properties:
\begin{enumerate}
\item[(WR)]Weak reciprocity: For every $S$-field $F$, let $\partial_{\infty}$ be the valuation on $F(u)$ at infinity, then 
$$\partial_{\infty}(A^0(\A^1_F, M, \sL))=0.$$ 
\item[(H)] For every $S$-field $F$, the complex
$$0\to M(F, \sL)\xrightarrow{r} M(\xi, \sL)\xrightarrow{d} \bigoplus_{x\in (\A^1_F)^{(1)}} M(x, \sL)\to 0$$  is exact. Here $\xi$ denotes the generic point of $\A^1_F$.
\end{enumerate}
\end{proposition}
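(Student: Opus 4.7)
The plan is to establish the exact sequence (H) first, and then derive (WR) as an immediate consequence. The argument adapts Rost's original strategy for cycle modules \cite[Proposition 2.2]{Rost} to the Milnor--Witt setting over the base $S$, following \cite[Theorem 9.1]{Feld}; the main new element compared to the pure field case is the bookkeeping for the graded line bundle twists.

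For injectivity of $r$, I would construct a splitting via specialization at an $F$-rational point. At $a = 0$, with uniformizer $t$, the composite $s_0 := \partial_{v_0} \circ \gamma_{[-t]} \colon M(\xi, \sL) \to M(F, \sL)$ satisfies $s_0 \circ r = \mathrm{id}$ by a direct application of axiom R3d to the field inclusion $\iota \colon F \hookrightarrow F(t)$, on which $v_0$ is trivial.

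For exactness at $M(\xi, \sL)$ and surjectivity of $d$, I would use the degree filtration. Set
\[
L_n := \{\rho \in M(\xi, \sL) \ : \ \partial_x(\rho) = 0 \text{ for every closed } x \in \A^1_F \text{ with } \deg_F \kappa(x) > n\}.
\]
By axiom (FD) we have $M(\xi, \sL) = \bigcup_{n \geq 0} L_n$, and it suffices to establish by induction on $n$ that the residue maps at degree-$n$ points assemble into a short exact sequence
\[
0 \to L_{n-1} \to L_n \to \bigoplus_{\deg_F(x) = n} M(x, \sL) \to 0,
\]
with $L_{-1} := r(M(F, \sL))$ as the base case. Left exactness is tautological from the definition of $L_n$. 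Surjectivity on the right is the substantial step: for each closed point $x$ of degree $n$ with minimal polynomial $\pi_x$, every $m \in M(x, \sL)$ must be realized as the residue at $x$ of some element of $L_n$. For $m = (\bar{\iota}_x)_*(y)$ pushed forward along $\bar{\iota}_x \colon F \hookrightarrow \kappa(x)$, the prototype lift $\gamma_{[-\pi_x]} \cdot \iota_*(y) \in M(\xi, \sL)$ works: axiom R3d identifies the residue at $x$ with $m$, while R3c together with R3e kill residues at all other closed points of degree $\leq n$. To reach arbitrary $m$, one reduces inductively to such pushforward elements by a Bass--Tate style generation of $\KMW_*$ of the residue field $\kappa(x)$ over $\KMW_*$ of intermediate subfields, combined with R2 and R4a to maintain compatibility of the graded line bundle identifications.

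Once (H) is established, (WR) follows immediately: any $\rho \in A^0(\A^1_F, M, \sL)$ is of the form $r(y)$ for some $y \in M(F, \sL)$, and since $\partial_\infty$ is a discrete valuation on $F(t)$ that is trivial on $F$, axiom R3c applied to $\iota \colon F \hookrightarrow F(t)$ yields $\partial_\infty(r(y)) = 0$. The main obstacle is the surjectivity assertion in the degree filtration: realizing \emph{every} element of $M(\kappa(x), \sL)$ as a residue requires a careful Bass--Tate style reduction in which the $\eta$-multiplication and the canonical line bundle twists must be tracked simultaneously, making the argument genuinely more delicate than in the case of classical Milnor K-theory.
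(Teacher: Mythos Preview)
Your degree-filtration plan is the right toolkit, but you have inverted the logical order and left the key step unproved. The paper follows Rost (his Proposition~2.2 and Theorem~2.3, with the sole modification that the Milnor identity $\{ab\}=\{a\}+\{b\}$ is replaced by $[ab]=[a]+[b]+\eta[a][b]$), in which (WR) is established \emph{first} and then fed into the proof of (H). Your filtration argument for $n\ge 1$ does give a splitting $M(\xi,\sL)=L_0\oplus\bigoplus_x M(x,\sL)$, hence surjectivity of $d$ together with $\ker d=L_0$; but your ``base case $L_{-1}:=r(M(F,\sL))$'' is a definition, not a theorem, and nothing in the coresidue lifts touches the equality $L_0=r(M(F,\sL))$. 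That equality---that every $\rho$ unramified on $\A^1_F$ already comes from $F$---is precisely what Rost proves using (WR), by extending $\rho$ to an unramified element on all of $\mathbb{P}^1_F$. For $\KM$ or $\KMW$ themselves one can argue by generators and relations that $L_0$ is generated by constants, but for an abstract MW-cycle module $M$ no such presentation is available. Since you intend to \emph{derive} (WR) from (H), you cannot appeal to it here; you must prove (WR) independently first, as Rost does.

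A secondary point: your prototype lift $\gamma_{[-\pi_x]}\cdot\iota_*(y)$ only reaches the image of $M(F,\sL)$ inside $M(\kappa(x),\sL)$, and ``Bass--Tate style generation of $\KMW_*(\kappa(x))$'' does not extend this to arbitrary $m$ when $M$ is abstract. The actual coresidue in Rost and Feld is built via the finite transfer (D2) along $\kappa(x)(t)/F(t)$, applied to $\gamma_{[-(t-\theta)]}\cdot r_{\kappa(x)}(m)$ with $\theta$ the image of $t$ in $\kappa(x)$; the MW relation $[ab]=[a]+[b]+\eta[a][b]$ the paper mentions enters exactly when verifying that the unwanted residues at the other points of degree $\le n$ vanish.
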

\begin{proof}
The proof is as in~\cite[Proposition 2.2, Theorem 2.3]{Rost}, with necessary modifications as in the identities ${ab}=\{a\}+\{b\}$ replaced by $[ab]=[a]+[b]+\eta[a][b]$, etc.
\end{proof}

\begin{example}\label{eg1}
Let $S=\Spec(R)$ where $R$ is an excellent DVR. Let $\eta$ and $s$ denote its generic and closed points respectively. Let 
\begin{align*}
M(F, \calL_F):= \begin{cases}
\KMW(F, \calL_F) & \text{if $F$ lies over } s,\\
0         & \text{otherwise}.
\end{cases} 
\end{align*} 
It is easy to see that $M$ has the structure of a cycle module, using the cycle module structure on $\KMW$ on the $\kappa(s)$. The Gersten complex of $M$  on $S$ is
 $$0\to M(\eta,\calL_S)\xrightarrow{\partial} M(s, \calL_S)\to 0$$
 which is clearly not exact in degree 1. 
 \end{example}

First, we make some conventions applicable throughout the paper, unless mentioned otherwise.
\begin{notation}
For a given Milnor-Witt pre-cycle module $M$ over $S$, we simply write $M(E,*)$ when the coefficient is implicitly determined by the context or when it can be any arbitrary coefficient. For example, given a field $E$ and an $S$-discrete valuation $v$ instead of writing 
$$ M(E,\calL_E) \xrightarrow{\partial_v} M(\kappa(v),\calN^{\vee}_v\tensor \calL_{\kappa(v)})$$
we sometimes simply write 
$$ M(E,\calL_E) \xrightarrow{\partial_v} M(\kappa(v),*)$$ 
The axioms canonically tell us what coefficient to fill in place of $*$ above. Moreover, if the coefficient $\calL_E$ is not of any particular interest and can be arbitrary, we simply write the above map as 
$$ M(E, *) \xrightarrow{\partial_v} M(\kappa(v),*)$$
This abuse of notation, which will only be required in this section, is to facilitate the focus on the idea, instead of coefficients, whenever they are anyway canonically determined or are arbitrary. 

\end{notation}
In order to find a good class of Milnor-Witt cycle modules over a DVR which satisfy local acylicity, we first observe the following. 
\begin{proposition}\label{r5field}
Let $M$ be a Milnor-Witt cycle module over a perfect field $k$. 
Let $F \in \Field/k$ with distinct discrete $k$-valuations $v_1,..,v_n$. 
Let $\calL$ be a graded line bundle on the semi-local ring $ \sO = \Intersection_i \sO_{v_i}$.Then
 $$\ M(F, *)\xrightarrow{\sum_{i=1}^n\partial_{v_i}} \bigoplus_{i=1}^n M(\kappa(v_i),*)$$ is surjective.
\end{proposition}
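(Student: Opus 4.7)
My plan is to reduce the surjectivity claim to Feld's local acyclicity theorem for MW-cycle modules over the perfect field $k$, applied to a one-dimensional essentially smooth semi-local $k$-scheme which geometrically realizes the data $(F, v_1, \ldots, v_n)$.

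The first input is that each $v_i$ is divisorial: since by hypothesis each residue field $\kappa(v_i)$ lies in $\Field/k$ and is therefore finitely generated over $k$, the universally Japanese property of $k$ furnishes a normal $k$-variety $Y_i$ of finite type with $k(Y_i)=F$ and a codimension-one point $y_i\in Y_i$ such that $\sO_{v_i}=\sO_{Y_i,y_i}$.

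I would then assemble a common finite-type model for all $v_i$ simultaneously. After replacing each $Y_i$ by a projective closure, let $X$ denote the normalization of the closure of the diagonal image of $\Spec F$ in $Y_1\times_k\cdots\times_k Y_n$. Then $X$ is a normal projective $k$-variety with $k(X)=F$ dominating each $Y_i$ birationally, so for the unique center $z_i$ of $v_i$ on $X$ the chain of local inclusions $\sO_{Y_i,y_i}\subset\sO_{X,z_i}\subset\sO_{v_i}=\sO_{Y_i,y_i}$ collapses to an equality $\sO_{X,z_i}=\sO_{v_i}$, forcing $z_i\in X^{(1)}$. Consequently, the semi-local ring
$$\sO\;:=\;\bigcap_{i=1}^n\sO_{v_i}\;=\;\sO_{X,\{z_1,\ldots,z_n\}}$$
is a one-dimensional normal semi-local ring, essentially of finite type over $k$; being regular and $k$ being perfect, it is essentially smooth over $k$.

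Finally, the Gersten complex $C^\bullet(\Spec\sO, M,\sL)$ reduces to a two-term complex
$$0\;\to\;M(F,*)\;\xrightarrow{\;d\;}\;\bigoplus_{i=1}^n M(\kappa(v_i),*)\;\to\;0,$$
whose differential $d$ coincides with $\sum_i\partial_{v_i}$ up to the canonical identifications of the graded line bundle twists provided by axioms R3a and R4a. Applying Feld's local acyclicity theorem for MW-cycle modules over a perfect field to the essentially smooth semi-local $k$-scheme $\Spec\sO$ forces $H^1$ of this complex to vanish, which is precisely the required surjectivity of $\sum_i\partial_{v_i}$.

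The main obstacle is the construction of the common model $X$: both verifying that every $v_i$ shows up at codimension one on $X$ after the graph-closure and normalization, and confirming that the resulting semi-local ring really is essentially of finite type over $k$. Everything else then falls within the scope of previously established results, with the arithmetic of the twists being absorbed into canonical isomorphisms of graded line bundles.
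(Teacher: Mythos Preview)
Your argument is correct, and in fact the paper itself explicitly acknowledges this route: immediately after stating the proposition it remarks that ``the claim for essentially smooth semilocal ring $\sO$ directly follows from Feld (see \cite[Theorem~8.1]{Feld}),'' which is exactly what you do once you have built the common model $X$ and verified that $\Spec\sO$ is essentially smooth (using perfectness of $k$). Your geometric construction of $X$ is fine, and the worry you flag about each $v_i$ appearing in codimension one after normalization is handled by the domination inequalities you wrote down.

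The paper, however, deliberately sets this approach aside and gives a direct axiomatic proof. It reduces to hitting a single $\alpha\in M(\kappa(v_1),*)$ while killing the other residues, then passes to a finite extension $E/F$ admitting a coefficient field for an extension $w_1$ of $v_1$ (obtained from the henselization and separable generation of $\kappa(v_1)/k$), uses R3d to see that $\partial_{w_1}\circ\gamma_{[-\pi]}$ is surjective, and pushes the resulting element back down via $\phi^*$ together with R3b and a congruence $\pi\equiv -1$ at the other valuations to kill the unwanted residues. The trade-off: your route is conceptually cleaner and offloads all the work to Feld's theorem, at the cost of invoking a result whose proof is substantially harder than the proposition itself; the paper's route is self-contained within the axioms D1--D4 and R1--R3, and has the virtue of isolating exactly which ingredients (a coefficient field and a careful choice of uniformizer) make R5 hold --- information that is reused later when verifying R5 for $\KMW$ over a DVR, where no analogue of Feld's theorem is yet available.
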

\begin{proof}
The statement of this proposition simply says that local acyclicity holds for the semi-local ring $\sO$. Thus, the claim for essentially smooth semilocal ring $\sO$, directly follows from Feld (see \cite[Theorem 8.1]{Feld}). However, we give a direct proof which is shorter. To show this, it is enough to show that for any $1\leq j\leq n$, and an $\alpha\in M(\kappa(v_j),*)$, there exists a $\beta \in M(F, *)$ such that 
$$\partial_{v_{j}}(\beta)=\alpha \ \ \  \text{and}  \ \ \ \partial_{v_{i}}(\beta)=0 \ \forall\  i\neq j. \ \ \ ({\star})$$
Without loss of generality, we may assume $j=1$. 

\noindent \underline{Step 1}: Choose a finite extension $E/F$ such that there exists a valuation $w_1$ of $E$ extending the valuation $v_1$ on $F$ satisfying  
\begin{enumerate}
\item $k(w_1) / k(v_1)$ is a trivial extension.
\item There exists a field $l\subset E$ which maps isomorphically onto $k(w_1)$.
 \end{enumerate}
 To see this, let $F^h$ be the fraction field of the Henselization $\sO^h_{v_1}=\^{\sO}_{v_1}\cap F^{sep}$ of $\sO_{v_1}$ which has the residue field $k(v_1)$. Existence of a finite subextension $E/F$ of $F^h/F$ follows from the fact that $k(v_1)/k$ is a separably finitely generated field extension. 

Let $\Sigma$ be the set of all valuations of $E$ except $w_1$, which lie over one of the $v_i$ for $1\leq i\leq n$. Choose a uniformizer $\pi$ of $w_1$ such that $\pi\in F$ and $\pi \equiv -1 \ {\rm mod} \ {\mathfrak m}_{w}$ for all $w\in \Sigma$. \\

\noindent \underline{Step 2}: First observe that (using the fact that $E$ contains the residue field $k(w_1)$)  
 $$\partial_{w_1}\circ\gamma_{[-\pi]}: M(E, *)\to M(k(w_1), *)$$
is surjective. We leave it to the reader to check that this follows directly from R3d. Thus, there exists a $\tilde{\beta} \in M(E,\calL_E)$ such that its image via the above map followed by the isomorphism 
$$ M(k(w_1), *) \to M(k(v_1), *)$$ is $\alpha$. 
Let $\beta = \phi^*\tilde{\beta}$. where $\phi$ is the map from $(F, \calL_F)\to (E, \calL_E)$ We will need to show that $\beta$ satisfies the required property mentioned in equation $(\star)$.  
This follows from R3b and Lemma \ref{pi=1}. Details are left to the reader.
\end{proof}

\begin{lemma}\label{pi=1}
Let $M$ be a cycle module over a base $S$. Let $F$ be an $S$-field and $v$ be a $S$-valuation on $F$. Let $c\in F$ such that $c\equiv 1 \ {\rm mod} \ {\mathfrak m}_v$. Then $\partial_v \circ \gamma_{c} = 0$. 
\end{lemma}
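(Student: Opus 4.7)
The plan is to reduce the statement directly to axiom R3e. Since $c \equiv 1 \pmod{\fr m_v}$, the element $c$ is a unit in $\sO_v$ with residue $\bar c = 1 \in \kappa(v)$, so R3e is applicable to $\gamma_{[c]}$. (I read the notation $\gamma_c$ in the statement as $\gamma_{[c]}$, which is consistent with the usage in the preceding proof of Proposition~\ref{r5field}, where the lemma is invoked on elements of the form $-\pi$ satisfying $-\pi \equiv 1 \pmod{\fr m_w}$ for valuations $w \in \Sigma$.) Axiom R3e then gives
$$ \partial_v \circ \gamma_{[c]} \;=\; \gamma_{\epsilon[\bar c]} \circ \partial_v .$$

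To finish, I would invoke the standard Milnor--Witt K-theory relation $[1]=0$ in $\KMW_1(\kappa(v))$. Since $\bar c = 1$, this yields $[\bar c] = 0$, hence $\epsilon[\bar c] = 0$, and the right-hand side above reduces to $\gamma_0 \circ \partial_v$. The identity $\gamma_0 = 0$ follows from the $\KMW(\kappa(v),0)$-module structure on $M(\kappa(v), *)$ implicit in the axioms, via the one-line argument $\gamma_0 = \gamma_{0+0} = \gamma_0 + \gamma_0$.

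I do not anticipate any essential obstacle: the entire argument is a direct unwinding of R3e combined with $[1]=0$ in $\KMW_1$. The only point meriting attention is the interpretation of the symbol $\gamma_c$, which is settled by examining how the lemma is used in Proposition~\ref{r5field}: there one needs exactly that $\partial_w \circ \gamma_{[-\pi]}$ vanishes for those $w \in \Sigma$ at which $-\pi$ is a unit reducing to $1$, and this is what the R3e computation above provides.
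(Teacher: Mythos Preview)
Your proof is correct and is essentially identical to the paper's own proof, which reads in its entirety: ``Since $\epsilon[\overline{c}] = 0$, this is a direct consequence of R3e.'' You have simply spelled out the two implicit steps (that $\bar c = 1$ gives $[\bar c]=0$ in $\KMW_1$, and that $\gamma_0 = 0$), and correctly identified the intended meaning of $\gamma_c$ as $\gamma_{[c]}$.
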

\begin{proof}
Since $\epsilon[\overline{c}] = 0$, this is a direct consequence of R3e. 
\end{proof}

To avoid the non-acyclic example~\ref{eg1} above, we consider the following class of Milnor-Witt cycle modules.
\begin{definition}[{\bf R5}]\label{R5}
Let $M$ be a MW-cycle module over $S$. $M$ is said to satisfy property R5 if
for every field $F \in \Field/S$ and horizontal $S$-discrete valuations $v_1,..,v_n$, let $\calL$ be a graded line bundle on the semi-local ring $ \sO = \Intersection \sO_{v_i}$. Then
 $$\ M(F, \calL_F)\xrightarrow{\sum_{i=1}^n\partial_{v_i}} \bigoplus_{i=1}^n M(\kappa(v_i),*)$$ is surjective.
 \end{definition}

We now prove Theorem \ref{kmwcheck} to show that $\KMW$ is a cycle module satisfying R5. 
\begin{proof}[Proof of Theorem \ref{kmwcheck}] We leave it to the reader to check that $\KMW$ is MW-pre-cycle module as the proofs are exactly the same as in the field case. It thus remains to check closedness property (C) in the Definition~\ref{CM} and property R5. \\

\noindent \underline{Step 1}: In this step we show that $\KMW$ satisfies R5. To see this, let $E$ be an $S$-field and $v_1,v_2,..,v_n$ be finitely many distinct valuations. As in the proof of Theorem \ref{r5field} it is enough to show that for any $\alpha \in \KMW(k(v_1),*)$ there exists a $\beta$ such that 
$\partial_{v_1}(\beta)=\alpha$ and $\partial_{v_i}(\beta)=0$ for all $i>1$. 

\noindent We first claim that for any uniformizer $\pi$ of $v_1$, the specialization map 
$$s^{\pi}_v=\partial_v\circ\gamma_{[-\pi]}: \KMW(F, \calL_F)\to \KMW(\kappa(v), *)$$
is surjective. This follows from the fact that if  $\alpha \in \KMW_n(k(v_1), (({\mathfrak m}_1/{\mathfrak m}_1^2)^{\vee}, -1))$ is of the form $[u_1,\cdots,u_n]\tensor \overline{\pi}^{\vee}$ then for lifts $\tilde{u}_i$ of $u$ in $\sO_{v_1}$, we have 
$$\partial_{v_1}([-\pi][\tilde{u}_1, \cdots ,\tilde{u}_n]) = \alpha$$
Moreover if we choose $\pi \equiv -1 \ {\rm mod} \ {\mathfrak m}_j$ for $j>1$, then clearly 
$$\partial_{v_j}([-\pi][\tilde{u}_1, \cdots ,\tilde{u}_n]) = 0 \ \forall \ j>1$$ thus showing R5.\\

\noindent \underline{Step 2}: In the remaining steps we verify the closedness property \eqref{CM}(C) following analogous proofs for Milnor K-theory in~\cite[Proposition 1]{kato86} and~\cite[Proposition 49.30]{Elm-Kar-Mer}. As in ~\cite[Proposition 1]{kato86}, it suffices to check \eqref{CM}(C) when $X=\Spec(A)$ where $(A,\fr m)$ is a complete local domain of dimension 2. By Cohen's structure theorem (see ~\cite[\href{https://stacks.math.columbia.edu/tag/032D}{Tag 032D}]{stacksproj}), there is a complete DVR $\Lambda\subset A$ such that $A$ is finite over $\Lambda[[t]]$ for an indeterminate $t$. Let $\pi$ be the uniformizer of $\Lambda$. Let $A/\fr m=\kappa$. Let $K={\rm Frac}(A).$
It remains to prove the claim that 
$$\KMW_{n+2}(K)\xrightarrow{\sum_{x\in X_{(1)}} d_x} \bigoplus_{x\in X_{(1)}} \KMW_{n+1}(\kappa(x), \Lambda_x)\xrightarrow{\sum_{x\in X_{(1)}} d'_x} \KMW_{n}(\kappa, (\det \fr m/\fr m^2)^{\vee}) $$
is a complex.\\

\noindent \underline{Step 3}: We will first prove the claim in Step 2 when $m:=n+2\geq 1$. 
In this case, $\KMW_{m}(K)$ is generated by $[u_1]\cdots [u_{m}]$ for $u_i\in K^{\x}.$ 
Moreover, $K^\x$ is generated by $A^\x, H, \pi, t$ where $H$ is the subgroup of $K^\x$ generated by elements of the form 
$$1+a_1t^{-1}+\cdots+ a_rt^{-r} \ \text{where} \ a_i  \in (\pi) \ \forall \ i \ \text{and} \  r \geq 0.$$
Let $H'$ be the subgroup generated by $H, \pi, t$. Using this fact, and the results in \cite[3.17 (2), (3), 3.5 (1)]{morel}, we leave it for the reader to verify that proving the claim is equivalent to showing the following 

%We first consider the case where $n+2\geq1$ and show that the composition $$d\circ d: \KMW_{n+2}(K)\to \KMW_{n}(\kappa, (\det \fr m/\fr m^2)^{\vee})$$ is 0.
%Write $u_1=a\cdot b$ where $a\in A^{\x}$ and $b\in H'$. Then by~\cite[Lemma 3.5 (1), page 51]{morel}
%$$[u_1]\cdots [u_{m}]=[a][u_2]\cdots [u_{m}]+<a>[b][u_2]\cdots [u_{m}].$$
%We observe from~\cite[Proposition 3.17 (2), (3)]{morel} 
%\begin{align}
%&d'_x\circ d_x([a]\beta)=d'_x(\epsilon[\-a] d_x(\beta))=[\-a]d'_x\circ d_x(\beta)\\
%&d'_x\circ d_x(<a>\gamma)=<\-a>d'_x\circ d_x(\gamma)\\
%&d'_x\circ d_x(\eta\cdot\delta)=\eta\cdot d'_x\circ d_x(\delta)
%\end{align}
%in $\KMW_n(\kappa, (\det \fr m/\fr m^2)^{\vee})$, where $a\in A^{\x}$, $\beta\in \KMW_{n+1}(K)$, $\gamma\in \KMW_{n+2}(K)$, $\delta\in \KMW_{n+3}(K)$, $\eta\in \KMW_{-1}(K)$ and $x\in X_{(1)}$. 
%Also 
%$$d'_x\circ d_x(<b>\beta)=<\-b>d'_x\circ d_x(\beta)$$ 
%for $b\in A^{\x}$ and $\beta\in K_{n+2}(K)$ and $x\in X_{(1)}$.
% It is enough to show that 
 \begin{equation}\label{(2)}
\sum_{x\in X_{(1)}} d'_x\circ d_x([\pi][t])=0
\end{equation}
\begin{equation}\label{(3)}
\sum_{x\in X_{(1)}} d'_x\circ d_x([f][g_1]\cdots[g_{n+1}])=0
 \end{equation}
for all $f\in H$ and $g_i\in H'$.

%Without the twist the formula doesn't hold.
%Let $x_1$ (resp. $x_2$) be the codimension 1 point in $X$ corresponding to the prime ideal $(t)$ (resp. $(\pi) $). Then for $x\in X_{(1)}-\{x_1, x_2\}$, $d_x([\pi][t])=0$.
%Also since $t$ is a uniformizer of $X$ at $x_1$ and $\pi$ is a uniformizer of $x_1$ at the the closed point, $$d'_{x_1}\circ d_{x_1}([\pi][t])=d'_{x_1}\circ d_{x_1}(\epsilon[t][\pi])=d'_{x_1}(\epsilon [\pi]))=\epsilon.$$
%Here $\epsilon=-<-1>.$
%Similarly, since $\pi$ is a uniformizer of $X$ at $x_2$ and $t$ is a uniformizer of $x_2$ at the the closed point, 
%$$d'_{x_2}\circ d_{x_2}([\pi][t])=d'_{x_2}([t]))=<1>.$$
%
%Thus, $$\sum_{x\in X_{(1)}} d'_x\circ d_x([\pi][t])=-<-1>+<1>.$$

 By Weierstrass preparation theorem, the height 1 prime ideals in $A$ are given by either $(\pi)$ or $(f)$ where $f\in \Lambda[[t]]$ given by 
$$f = t^r+a_{r-1}t^{r-1}+\cdots+a_0 \ \text{where}\  a_i\in (\pi) \ \forall \  i.$$

\noindent \underline{Step 4}:
In this step, we show that~\eqref{(2)} of Step 3 holds. Let $x_1$ (resp. $x_2$) be the codimension 1 point in $X$ corresponding to the prime ideal $(t)$ (resp. $(\pi) $). Then, for $x\in X_{(1)}-\{x_1, x_2\}$, $d_x([\pi][t])=0$, since both $\pi$ and $t$ are units at $x$.
Also, since $t$ is a uniformizer of $X$ at $x_1$ and $\pi$ is a uniformizer of $x_1$ at the closed point, \begin{align*}
d'_{x_1}\circ d_{x_1}([\pi][t]) %&=d'_{x_1}\circ d_{x_1}(-<-1>[t][\pi])\\
                                         %   &=-<-1>d'_{x_1}([\pi]\otimes \-t^{\vee})\\
                                         %   &=-<-1>\otimes \-\pi^{\vee}\wedge  \-t^{\vee}\\
                                            &=-1\otimes  \-t^{\vee}\wedge  \-\pi^{\vee}.
\end{align*}

%Here $\epsilon=-<-1>.$
\noindent Similarly, since $\pi$ is a uniformizer of $X$ at $x_2$ and $t$ is a uniformizer of $x_2$ at the closed point, 
$$d'_{x_2}\circ d_{x_2}([\pi][t])=d'_{x_2}([t]\otimes \-\pi^{\vee})=1\otimes  \-t^{\vee}\wedge \-\pi^{\vee}.$$
Thus, $$\sum_{x\in X_{(1)}} d'_x\circ d_x([\pi][t])=-1\otimes  \-t^{\vee}\wedge  \-\pi^{\vee}+1\otimes  \-t^{\vee}\wedge \-\pi^{\vee}=0.$$

\noindent \underline{Step 5}: We now verify  \eqref{(3)} of Step 3.

Let $L={\rm Frac}(\Lambda)$ and $$h:X'=\Spec A[\pi^{-1}]\to \Spec L[t]=\A^1_L.$$ Then, $X'_{(0)}=X_{(1)}-\{x_2\}$ and
$h(X'_{(0)})$ is the set of closed points of $\A^1_L$ associated to the irreducible polynomials of the form $t^r+a_{r-1}t^{r-1}+\cdots+a_0$ where $a_i\in (\pi)$ for all $i$. For each $x\in X'_{(0)}=X_{(1)}-\{x_2\}$, $k(x)$ is a finite field extension of $L$ via $h$. We have $d'_x=\partial \circ Nm_{k(x)/L}$, where $$\partial: \KMW_{n+1}(L, ((t)/(t)^2)^{\vee})\to \KMW_{n}(\kappa, (\det \fr m/\fr m^2)^{\vee})$$ is the residue morphism associated to the discrete valuation on $L$. 
To see this, we observe from~\cite[Remark 5.20, page 127]{morel} and an analogue of~\cite[Cor. 7.4.3 page 205]{Gille-Szamuely} for Milnor-Witt K theory of fields proved in~\cite[Corollary 2.2.23]{feld20} that the following diagram 
$$\xymatrix@C=6em{
\KMW_{n+1}(\kappa(x), \Lambda_x)\ar[r]^{~\sum_{w}\partial_{w}}\ar[d]^{Nm_{\kappa(x)/L}}      & \bigoplus_{w}\KMW_{n}(L(w), \Lambda_w)\ar[d]^{\sum_{w}Nm_{L(w)/\kappa}}\\
\KMW_{n+1}(L, ((t)/(t)^2)^{\vee})\ar[r]^{\partial}      & \KMW_{n}(\kappa, \det \fr m/\fr m^2)
}$$
commutes.
Here $w$  denotes a discrete valuation of $\kappa(x)$ which restricts to the discrete valuation on $L$ and $L(w)$ is the residue field of $w$. Let $i:  \KMW_{n+1}(L(t))\to  \KMW_{n+1}(K)$ induced by the inclusion $L(t)\to K$.

Thus,
\begin{align}
\sum_{x\in X_{(1)}} d'_x\circ d_x\circ i&=d'_{x_2}\circ d_{x_2}\circ i+\sum_{x\in X_{(1)}-\{x_2\}} d'_x\circ d_x\circ i\\
&= d'_{x_2}\circ d_{x_2}\circ i+\partial \Big(\sum_{x\in X_{(1)}-\{x_2\}} Nm_{\kappa(x)/L}\circ d_x\circ i\Big).
\end{align}
Let $\alpha=[f][g_1]\cdots[g_{n+1}]\in\KMW_{n+2}(L(t)) .$
We observe that $div(f), div(g_i)$ are supported in the image $h(X'_{(0)})\subset (\A^1_L)_{(0)}$. Thus, for $x\in (\A^1_L)_{(0)}-h(X'_{(0)}), d_x(\alpha)=0.$
 
Also, at $\infty\in (\Pone_L)_{(0)}$, $f(\infty)=1$, hence $d_{\infty}\circ i(\alpha)=0$.

By the reciprocity law~\cite[Chapter 4, (4.8), page 98]{morel},
$$\sum_{x\in X_{(1)}-\{x_2\}} Nm_{\kappa(x)/L}\circ d_x\circ i(\alpha)=\sum_{x\in (\Pone_L)_{(0)}} Nm_{\kappa(x)/L}\circ d_x\circ i(\alpha)=0.$$
At $x_2$, $f(x_2)=1$, hence $d_{x_2}\circ i(\alpha)=0.$ Thus,~\eqref{(3)} holds.\\

\noindent \underline{Step 6}: It now remains to prove the claim of Step 2 in the case $m:=n+2\leq 0$. In this case $\KMW_m(K)$ is generated by $\eta^m\< u \>$, for $u\in K^{\x}$. Since the differentials $d$ commute with $\eta$, it is enough to show that $d\circ d(\<u\>)=0$ for $u\in K^{\x}$. We note that $\<u\>=1+\eta[u]$, so it suffices to show : 
\begin{align}
d\circ d([\pi])&=0 \label{pi}\\
d\circ d([t])&=0 \label{t}\\
d\circ d([h])&=0 \label{h} \ \ \ \text{where} \ h\in A^{\x} \ \text{or} \ h\in H.
\end{align}
The above are explicit and straightforward calculations along the lines of arguments used in Steps 4 and 5. We leave the details to the reader. 
%To see~\eqref{pi}: 
%\begin{align*}
%d\circ d([\pi])&=\sum_{x\in X_{(1)}} d'_x\circ d_x([\pi]) \\
%                   &= \sum_{x\in X_{(1)}-\{x_1, x_2\}}d'_x\circ d_x([\pi])+ d'_{x_1}\circ d_{x_1}([\pi])+d'_{x_2}\circ d_{x_2}([\pi])
%\end{align*}
%For $x\in X_{(1)}-\{x_1, x_2\}$, $\pi$ is a unit at $x$, hence, $d_x([\pi])=0$. Further, recall that $x_1$ (resp. $x_2$) be the codimension 1 point in $X$ corresponding to the prime ideal $(t)$ (resp. $(\pi) $). 
%Since $t$ is a uniformizer of $X$ at $x_1$,  $d_{x_1}([\pi])=0$. Further, since $\pi$ is a uniformizer of $X$ at $x_2$, and $t$ is a uniformizer of $x_2$ at the closed point, $d'_{x_2}\circ d_{x_2}([\pi])=d'_{x_2}(<1>\otimes \-{\pi}^{\vee})=0$. 
\end{proof}

%\begin{remark}
%\noindent\begin{enumerate}
%\item As a consequence, we can observe that quotients of Milnor-(Witt) K-theory by a graded ideal and direct sums of such cycle modules satisfy the property R5.

%\item Let $f: S\to T$ be a morphism of schemes satisfying the assumption as above. Then there is a functor $f_*: \fr F_S\to \fr F_T$ which induces $$f^*: \fr M^{MW}_T\to \fr M^{MW}_S.$$
%We observe that if $M$ is a MW-cycle module on $T$ which satisfies the property R5, then the MW-cycle module $f^*M$ on $S$ satisfies the property R5. For $T=\Spec k$, so that $k$ is a perfect field, this gives a plenty of examples of MW-cycle modules on $S$ satisfying the property R5. Ofcourse, there are examples of MW-cycles modules on $S$ which are not isomorphic to $f^*M$ for a MW-cycle module $M$ on $\Spec k$ and $f: S\to \Spec k$ and also not satisfying the property R5.  
%\end{enumerate}
%\end{remark}

\noindent We now recall some definitions of morphisms between Gersten complexes. These are exactly as in \cite[Section 5]{Feld} 

\begin{definition}\label{maps}
\noindent\begin{enumerate}
\item
Let $f:X\to Y$ be a finite type morphism of equi-dimensional $S$-schemes with $\dim X=d$ and $\dim Y=e$. Let $\calL_Y$ be a graded line bundle on $Y$. Let $\calL_X$ be the pullback of $\calL_Y$ on $X$. Then there is a map 
$$f_*: C^q(X, M, \calL_X)\to C^{e-d+q}(Y, M,\calL_Y)$$ defined for 
 $\alpha\in M(x, \calL_X)$ as
 \begin{align*}({f_*})_{xy}(\alpha)=
\begin{cases}
\phi^*(\alpha) & \text{if $f(x)=y$ and $\phi: \kappa(y)\to \kappa(x)$ is a finite field extension}\\
 0 & \text{otherwise}
 \end{cases}
 \end{align*}
 where  $\phi^* :M(x, \calL_X)\to M(y, \calL_X)$.  When $\phi$ is finite morphism, $f_*$ is a morphism of complexes (Lemma \ref{cycle-prop}).
  \item 
Let $g:Y\to X$ be a smooth morphism of equi-dimensional $S$-schemes with constant relative dimension $s$. Let $\calL_X$ be a graded line bundle on $X$. Let $\calL_Y$ be the pullback of $\calL_X$ on $Y$. Then there is a map
$$g^*: C^q(X, M, \calL_X)\to C^{q}(Y, M, (\w_{Y/X}^{\vee}, -s)\ox\calL_Y)$$ defined for 
 $\alpha\in M(x, \calL_X)$ as
  \begin{align*}({g^*})_{xy}(\alpha)=
\begin{cases}
 \Theta\circ i_*(\alpha)      &\text{if $g(y)=x$ and $y$ is a generic point of $Y_x$}\\
 0    & \text{otherwise}
 \end{cases}
 \end{align*}
where  $i_*:M(x, \calL_X)=M(\kappa(x), \Lambda_x\ox\calL_X)\to M(\kappa(y), \Lambda_x\ox\calL_Y)$ and $$\Theta:  M(\kappa(y), \Lambda_x\ox\calL_Y)\to M(\kappa(y), \Lambda_y\ox(\w_{Y/X}^{\vee}, -s)\ox\calL_Y)= M(y, (\w_{Y/X}^{\vee}, -s)\ox\calL_Y)$$ is the isomorphism induced by the canonical isomorphism $$\Lambda_y\simeq \Lambda_x\ox  (\w_{Y/X}, s)$$ of graded line bundles on $\Spec \kappa(y)$.
 
 \item Let $X$ be an equi-dimensional $S$-scheme of dimension $d$. For a unit $u\in \sO(X)$, there is a map $$[a]: C^q(X, M, \calL_X)\to C^{q}(X, M, \A^1\ox\calL_X)$$ defined by 
\begin{align*}[a]_{xy}(\alpha)=
\begin{cases}
 \<-1\>^{d-q}[a]\cdot\alpha      &\text{if $x=y$ }\\
 0    & \text{otherwise}.
 \end{cases}
 \end{align*}
 
 \item Let $i: Z\hookrightarrow X$ be a closed subscheme of $X$ and $U:=X-Z\xrightarrow{j} X$ be the open complement of $Z$ in $X$. Let $\calL$ be a graded line bundle on $X$. There are term-wise maps  $j_*:C^q(U, M, \calL)\to C^q(X, M, \calL)$ and  $i^*: C^q(X, M, \calL)\to C^q_Z(X, M, \calL)$. There is a boundary map
 $$\partial^U_Z:C^q(U, M, \calL)\to C^{q+1}_Z(X, M, \calL)$$
 defined as $$\partial^U_Z:= i^*\circ d_X\circ j_*.$$
 \end{enumerate}
\end{definition}

We recall some properties satisfied by the above maps which will be used in the next section.
\begin{lemma}\label{cycle-prop}
\noindent\begin{enumerate}
\item For a finite morphism $f:X\to Y$ of $S$-schemes, $f_*\circ d_X=d_Y\circ f_*$.
\item  Let $g:Y\to X$ be a smooth morphism of equi-dimensional $S$-schemes with constant relative dimension $s$. Then $g^*\circ d_X=d_Y\circ g^*$.
\item For a unit $u\in \sO(X)$, $\gamma_{\epsilon\cdot[\-u]}\circ d_X= d_X\circ \gamma_{[u]}$. 
\end{enumerate}
\end{lemma}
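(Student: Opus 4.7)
The strategy is to verify each identity componentwise: fix points $x$ and $y'$ in the appropriate codimensions and compare the corresponding matrix entries of the two composed maps. In every case the reduction terminates at one of the MW cycle module axioms from the list in Section~2, and the three assertions are the natural adaptations to our set-up of the classical compatibilities proved in \cite[Section 5]{Feld} and \cite[Section 4]{Rost}.

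For part (1), fix $x \in X^{(q)}$ and let $Z$ (resp.\ $W$) be the normalization of $\overline{\{x\}}$ in $X$ (resp.\ of $\overline{\{f(x)\}}$ in $Y$), so that $f$ induces a finite morphism $\tilde{f}\colon Z \to W$. By the definition of the differential, the matrix block of $d_X$ starting at $x$ factors through $\sum_{z \in Z^{(1)}} \phi_z^* \circ \partial^x_z$, and similarly for $d_Y \circ f_*$ starting at $f(x)$. Grouping the codimension-$1$ points of $Z$ by their images under $\tilde{f}$ and invoking R3b (the compatibility of residues with finite field extensions as a sum over valuations above) matches the two expressions whenever $f$ is dominant on $\overline{\{x\}}$. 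The non-dominant contributions vanish by R3c (residues on valuations trivial on the source field), and the case $[\kappa(x):\kappa(f(x))]=\infty$ is zero on both sides by the definition of $f_*$.

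For part (2), fix $x \in X^{(q)}$ and $y \in Y^{(q)}$ a generic point of $Y_x$, and consider a codimension-$1$ specialization $y' \in Y^{(q+1)}$ of $y$. Two cases arise. If $g(y') = x$, the corresponding valuation on $\kappa(y)$ is trivial on $\kappa(x)$, so the relevant entry of $d_Y \circ g^*$ is governed by R3c and R3d. If $g(y') = x' \in X^{(q+1)}$ with $x' \neq x$, then smoothness of $g$ forces the induced extension of valuations to be unramified of ramification index $e=1$, hence $e_\epsilon = 1$ and R3a collapses to the naive base-change identity of residues. Summing over all $y'$ and tracking the canonical isomorphism $\Lambda_y \simeq \Lambda_x \otimes (\omega_{Y/X},s)$ together with its codimension-$1$ counterpart gives the desired identity. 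For part (3), the claim is an immediate consequence of R3e: for every valuation $v$ occurring in the differential, $u \in \sO(X)^\times$ restricts to a unit of $\sO_v$ with reduction $\overline{u}$, so $\partial_v \circ \gamma_{[u]} = \gamma_{\epsilon[\overline{u}]} \circ \partial_v$; summing over the valuations of all relevant normalizations yields the identity.

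The expected main obstacle is part (2): one must keep careful track of the canonical graded line bundle identifications induced by smoothness, namely the compatibility of $\omega_{Y/X}$ with the normalizations of closures of points in $X$ and $Y$, and verify that the $\epsilon$-signs and twist conventions entering $\Theta$ and the differentials combine correctly. By contrast, parts (1) and (3) are essentially direct applications of R3b, R3c and R3e once the componentwise reduction has been set up.
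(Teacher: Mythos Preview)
Your overall strategy---componentwise verification against the axioms, following \cite[Section~4]{Rost} and \cite[Section~5]{Feld}---is exactly what the paper does, and parts~(1) and~(3) are handled correctly: part~(1) reduces to R3b (together with R1b for composing transfers) as in \cite[Proposition~49.9]{Elm-Kar-Mer}, and part~(3) is immediate from R3e.

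There is, however, a gap in your treatment of part~(2). The paper singles out R1c and R3c as the relevant axioms, and you have omitted R1c. The issue is this: the differential $\partial^x_{x'}$ is not a single residue map but a sum $\sum_z \phi_z^* \circ \partial_z^x$ over points $z$ in the normalization of $\overline{\{x\}}$ lying over $x'$, each followed by the \emph{transfer} $\phi_z^*$ for the finite extension $\kappa(x') \to \kappa(z)$. The smooth pullback $g^*$ is built from \emph{restriction} maps $i_*$ along the (separable) extensions $\kappa(x') \to \kappa(y')$. To commute these---i.e.\ to compare $(i_{y'/x'})_* \circ \phi_z^*$ with the corresponding sum on the $Y$-side---you need precisely the base-change axiom R1c. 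Your invocation of R3a (with $e_\epsilon = 1$) handles only the commutation of residues with restriction, not the transfer-restriction commutation that the normalization step introduces. Likewise, R3d is not needed here: in the case $g(y') = x$ both sides already vanish by R3c alone, since the valuation is trivial on $\kappa(x)$ and $y'$ cannot lie in the image of $g^* \circ d_X$. So the axiom list for part~(2) should read R1c and R3c (with R3a entering implicitly when one unwinds the proof in \cite[Proposition~4.6]{Rost}), not R3a, R3c, R3d.
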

\begin{proof}
\noindent\begin{enumerate}
\item The proof in \cite[Proposition 49.9]{Elm-Kar-Mer} works for $M$ as is. 
\item The proof follows as in \cite[Proposition 4.6]{Rost} using R1c and R3c.
\item This follows from R3e.
\end{enumerate}\end{proof}

\section{Local acyclicity}
In this section, we assume $S=\Spec R$, where $R$ is an excellent DVR $(R, \fr m)$ with residue field $\kappa=\kappa(s)$ and the generic point $\eta$. The goal of this section is to prove Theorem \ref{main}. This proof is an adaptation of the proofs of \cite[Theorem 6.1]{Rost} and \cite[Theorem 3.1]{lueders20}. 

\begin{lemma}
\label{boundary} Let $M$ be a MW-cycle module on $S$.
Let $g: Y\to X$ be a smooth $S$-morphism of finite type $S$-schemes of relative dimension 1. Let $\sigma:X\to Y$ be a section to g and let $t\in \mathcal{O}_Y$ be a global parameter defining the subscheme   $\sigma(X)$. Moreover, let $\tilde{g}:U=Y-\sigma(X)\to X$ be the restriction of g and let $\partial$ be the boundary map associated to the tuple $(X\xrightarrow{\sigma} Y \xleftarrow{}U)$, then $$\partial\circ\tilde{g}^*=0$$ and $$\Theta\circ\partial\circ [t] \circ \tilde{g}^*=(id_X)_*,$$
where $\Theta$ is the isomorphism from the Gersten complex for $M$ on $Y$ supported at $X$ to the Gersten complex for $M$ on $X$.
%$\Theta$ is the isomorphism induced by $\calT_{U/X}\simeq \A^1_U.$
\end{lemma}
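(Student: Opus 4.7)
The plan is to compute both compositions pointwise on generators of $C^\bullet(X, M, \calL_X)$. Fix $\alpha \in M(x, \calL_X)$ with $x \in X^{(p)}$. By Definition \ref{maps}(2), the element $\tilde{g}^*\alpha$ is supported at the generic point $\xi$ of the fiber $U_x := g^{-1}(x) \cap U$ (or at each such generic point in the case of non-geometrically-irreducible fibers), and equals $\phi_*(\alpha)$ up to a canonical coefficient isomorphism involving $(\w_{U/X}^{\vee}, -1)$, where $\phi : \kappa(x) \hookrightarrow \kappa(\xi)$ is the induced extension. The boundary $\partial = i^* \circ d_Y \circ j_*$ then isolates those codimension-$(p+1)$ points of $Y$ that both lie on $\sigma(X)$ and specialize from $\xi$. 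Since $\sigma$ is a section and $g$ has relative dimension $1$, one has $\overline{\{\xi\}} \cap \sigma(X) = \sigma(\overline{\{x\}})$, and $\sigma$ identifies codimension-$(p+1)$ points of $Y$ lying on $\sigma(X)$ with codimension-$p$ points of $X$ inside $\overline{\{x\}}$. The unique such point is $x$ itself, so the entire computation reduces to the single point $\sigma(x)$, where the relevant valuation on $\kappa(\xi)$ is the $t$-adic valuation $v$ (whose associated residue field is $\kappa(\sigma(x))$).

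For part (1), observe that $\phi(\kappa(x)) \subset \mathcal{O}_{Y_x, \sigma(x)}^{\times} \subset \mathcal{O}_v^{\times}$, since elements of $\kappa(x)$ are units on the entire fiber $Y_x$ and $v$ measures vanishing along $\sigma(X)$. Thus $v$ is trivial on $\phi(\kappa(x))$, and axiom R3c yields $\partial_v \circ \phi_* = 0$. Hence $\partial \circ \tilde{g}^* = 0$.

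For part (2), inserting $[t]$ before $\partial$ and localizing as above, one must compute $\partial_v \circ \gamma_{[t]} \circ \phi_*(\alpha)$, weighted by the sign $\langle -1 \rangle^{\dim U - p}$ from Definition \ref{maps}(3). Choosing $-t$ as the uniformizer of $v$ (so that $-(-t) = t$), axiom R3d applies directly to give $\partial_v \circ \gamma_{[t]} \circ \phi_* = \bar\phi_*$, where $\bar\phi : \kappa(x) \xrightarrow{\sim} \kappa(\sigma(x))$ is the isomorphism induced by $\sigma$. The isomorphism $\Theta$ in the statement is designed precisely to undo the graded-line-bundle twists acquired through $\tilde{g}^*$ (the $\w_{U/X}^{\vee}$-twist) and $\partial_v$ (the $\calN_v^{\vee}$-twist, canonically trivialized by $t$), and to transport $\bar\phi_*(\alpha) \in M(\sigma(x), *)$ back to $M(x, \calL_X)$, thereby giving $(\mathrm{id}_X)_*(\alpha) = \alpha$.

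The main obstacle I expect is not the cycle-theoretic content — which is essentially the one-dimensional residue computation handled cleanly by R3c and R3d — but the careful bookkeeping of the chain of graded-line-bundle twists and the $\langle -1 \rangle$-signs propagating through $\tilde{g}^*$, $[t]$, $j_*$, $d_Y$, $i^*$ and $\Theta$. In particular, reconciling the sign $\langle -1 \rangle^{\dim U - p}$ in Definition \ref{maps}(3) with the Milnor-Witt identity $[-t] = [-1] + \langle -1 \rangle [t]$ (using R2a, R3c, and R3e) is the step that must be verified carefully to ensure that $\Theta \circ \partial \circ [t] \circ \tilde{g}^*$ is exactly the identity rather than the identity twisted by some $\langle -1 \rangle^k$.
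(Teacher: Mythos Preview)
Your proposal is correct and follows essentially the same route as the paper's proof. The paper simply says that, as in Rost's original argument, one reduces to the case where $X$ (and $S$) are fields and then cites Feld for the Milnor--Witt modifications; your pointwise reduction to the single point $\sigma(x)$ and the $t$-adic valuation on $\kappa(\xi)$ \emph{is} that reduction made explicit, and your invocation of R3c for the first identity and R3d for the second is exactly what happens in the field case. One small point worth making precise in your write-up: the reason the Gersten differential $\partial^{\xi}_{\sigma(x)}$ coincides with the single residue $\partial_v$ (rather than a sum over branches of a normalization) is that $\sigma(x)$ lies over the generic point $x$ of $\overline{\{x\}}$, so $g^{-1}(\overline{\{x\}})$ is regular at $\sigma(x)$ by smoothness of $g$. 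Your caveat about tracking the $\langle -1\rangle$-signs and the graded-line-bundle twists through $\Theta$ is well placed; that bookkeeping is exactly what the paper's remark on $\Theta$ after the proof is addressing, and it is the only part requiring care beyond the axioms.
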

\begin{proof}
As in Rost's original proof given in \cite[Lemma 4.5]{Rost}, the proof of this theorem reduces to the case when $X$ and $S$ are fields. We refer the reader to \cite{Feld} for the proof in this case, which is a minor modification of Rost's original proof. 

We add a remark about $\Theta$: $\partial$ maps into $C^{p+1}_{X}(Y, M, \A^1\ox(\w_{Y/X}^{\vee}, -1)\ox\calL_Y)$. For $x\in X$ and $y\in U$ the generic point of the fiber $\~g^{-1}(x)$, the canonical isomorphisms $$\Lambda_y\simeq \Lambda_x\ox (\w_{Y/X}, 1)\ox \kappa(y)$$ and $$ (\w_{Y/X}^{\vee}, -1)\ox (\A^1_{\kappa(y)}, 1)=(\kappa(y) , 0)$$ of graded line bundles on $\Spec \kappa(y)$ induces the isomorphism 
$$\Theta: C^{p+1}_{X}(Y, M, \A^1\ox(\w_{Y/X}, -1)\ox\calL_Y)\simeq C^p(X, M, \calL_X ).$$\end{proof}

We recall a geometric presentation lemma from Gillet-Levine~\cite{GL87}, that will be used in the proof of Theorem \ref{main}, following \cite[6.1]{Rost}. 
\begin{lemma}(\cite[Lemma 1]{GL87})\label{gillet-levine}
Let $X$ be an affine scheme flat of finite type over a discrete valuation
ring $R$. Let $T\subset X$ be a finite set of points such that $X$ is smooth at $T$ over $R$. Let $Y$ be a principle effective divisor which is flat over $R$. Then there exists an affine open neighbourhood
$X'\subset X$ of $T$ and a morphism $\pi: X' \to \A^{d-1}_S$, where d is the relative dimension of $X$ over $S$, such that \begin{enumerate}
\item  $\pi$ is smooth of relative dimension 1 at the points of $T$. 
\item $\pi|_{Y'}: Y'\to \A^{d-1}_S$ restricted to $Y' = X'\cap Y$ is quasi-finite.
\end{enumerate}
\end{lemma}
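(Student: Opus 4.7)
The plan is to follow Gillet-Levine's classical strategy of constructing $\pi$ as a generic linear projection after embedding $X$ in affine space, carrying out the generic-position argument simultaneously on the generic and special fibers of $S$.

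First I would embed $X$ as a closed subscheme of some $\A^N_R$ and parameterize linear projections $\A^N_R \to \A^{d-1}_R$ by the affine space $W = \A^{(d-1)N}_R$ of $(d-1)\times N$ matrices. The goal then is to produce a matrix $M \in W(R)$ whose associated projection $\pi_M$, restricted to a suitable affine open $X' \subset X$ containing $T$, is smooth of relative dimension $1$ at each point of $T$ and is quasi-finite when further restricted to $Y \cap X'$. Each of these is a Zariski-open condition on $W$ fiberwise over $\Spec R$: condition (1) is the surjectivity of the differentials $d(\pi_M)_t|_{T_tX}$ for the finitely many $t \in T$, and condition (2) on a given fiber follows from the classical Noether normalization lemma applied to the $(d-1)$-dimensional fiber $Y_\eta$ (resp.\ $Y_s$) of $Y$, which is $(d-1)$-dimensional on both sides by the flatness hypothesis.

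To combine the two open good loci in $W_K$ and $W_\kappa$ into a single $R$-point of $W$, I would first lift a good matrix on $W_\kappa$ to some $M_0 \in W(R)$, which is possible since the reduction $W(R) \surj W(\kappa)$ is surjective. Then, because $R$ is infinite, the coset $M_0 + \fr m\cdot W(R)$ is Zariski dense in $W_K$, so it must meet the dense open locus of good matrices there; picking such an element furnishes the required $M$. After producing $\pi_M$, smoothness at $T$ is automatic in a Zariski neighbourhood of $T$, and a further shrinking of $X$ removes the closed locus of $Y$ (disjoint from $T$) where quasi-finiteness of $\pi_M|_Y$ fails, yielding the required $X'$.

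The main obstacle is ensuring that the good loci on $W_\kappa$ and $W_K$ are genuinely nonempty when the residue field $\kappa$ of $R$ is finite: a naive generic-position argument can fail because a dense Zariski-open subset of $W_\kappa$ need not contain any $\kappa$-rational point. The standard remedy in \cite{GL87} is to enlarge $N$ and to work with more flexible families of projections (for instance, allowing translation parameters or higher-degree polynomial coordinate changes) so that the favourable locus is guaranteed to contain $\kappa$-rational points; this is where the bulk of the original proof is concentrated, and I would import it as a black box rather than redoing the small-field Bertini-type argument.
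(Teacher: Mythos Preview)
The paper does not prove this lemma at all: it is stated with the attribution \cite[Lemma 1]{GL87} and used as a black box in the proof of Proposition~\ref{locvanishforMW}. So there is no ``paper's own proof'' to compare against.

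That said, your sketch is a fair outline of the original Gillet--Levine argument (embed in $\A^N_R$, parametrize projections, verify openness of the smoothness and quasi-finiteness loci on each fiber, and combine via a lifting-plus-density argument). You correctly flag the genuine difficulty, namely producing rational points in the good locus over a finite residue field, and you are right that this is where \cite{GL87} does the real work; importing that step as a black box is exactly what the present paper does with the entire lemma. One small caution: your density claim ``$M_0 + \fr m\cdot W(R)$ is Zariski dense in $W_K$ because $R$ is infinite'' is not quite the right formulation---what one actually uses is that $\fr m$ is an infinite $R$-module (so the coset meets any nonempty $K$-open), and even this needs the finite-field refinement you already acknowledge.
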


\begin{proposition}
\label{locvanishforMW}
Let $X$ be a smooth $S$-scheme of relative dimension $d$. Let $Y\subset X$ be a closed subscheme of codim $\geq 1$ such that 
%$\dim Y_s < \dim X_s$.
$Y$ is flat over $S$.  
Let $y$ be a point in $Y$ lying over the closed point $s\in S$. Then there is a Zariski neighbourhood $X'$ of $y$ in $X$ such that the map
$$A^p_{Y\times_X X'}(X', M, \calL_{X'})\to A^p(X', M, \calL_{X'})$$
is trivial.
\end{proposition}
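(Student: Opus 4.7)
The plan is to adapt Rost's argument \cite[Theorem 6.1]{Rost}, with the DVR-specific modifications of L\"uders \cite[Theorem 3.1]{lueders20}: I want to produce a smooth projection of relative dimension $1$ under which $Y$ becomes finite, and then apply the boundary Lemma \ref{boundary} along the diagonal section of a fibre square built from this projection.

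First I would reduce to the case where, locally at $y$, the subscheme $Y$ is contained in a principal effective divisor $D$ which is flat over $S$; this is a harmless support reduction, since vanishing of the map with support on $D$ implies the same for the smaller subscheme $Y$. I then apply Lemma \ref{gillet-levine} with $T=\{y\}$ and divisor $D$ to obtain, after shrinking $X$ around $y$, a morphism $\pi : X \to \A^{d-1}_S$ which is smooth of relative dimension $1$ at $y$, with $\pi|_D$ quasi-finite. After further shrinking, $\pi$ is smooth of relative dimension $1$ everywhere on $X$ and $\pi|_Y$ is finite.

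Next I form the fibre square $W := X \times_{\A^{d-1}_S} X$ with the two projections $p_1, p_2 : W \to X$ and diagonal section $\Delta : X \to W$, so that $p_1$ is smooth of relative dimension $1$ and $p_2^{-1}(Y)$ is finite over $X$ via $p_1$. Since $\Delta(y)$ is an isolated point of $p_1^{-1}(y) \cap p_2^{-1}(Y)$, after shrinking $X$ to a Zariski neighbourhood $X'$ of $y$ I find an open $U \subseteq W$ containing $\Delta(X')$ such that $U \cap p_2^{-1}(Y') = \Delta(Y')$, where $Y' := Y \cap X'$. Shrinking once more I arrange a global parameter $t \in \sO(U)$ cutting out $\Delta(X')$ in $U$. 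Setting $V := U \setminus \Delta(X')$ and $\tilde g := p_1|_V$, Lemma \ref{boundary} then yields the key identity
$$
\Theta \circ \partial^V_{\Delta(X')} \circ [t] \circ \tilde g^{*} \;=\; (\mathrm{id}_{X'})_{*}
$$
on $C^{\bullet}(X', M, \calL_{X'})$.

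The homotopy step is the main content. Given a cycle $\alpha \in C^p_{Y'}(X', M, \calL_{X'})$, the chain $[t] \cdot p_2^{*}(\alpha) \in C^{\bullet}(U, M, \cdot)$ has support inside $U \cap p_2^{-1}(Y') = \Delta(Y') \subseteq \Delta(X')$, so its restriction to $V$ is zero. Meanwhile $[t] \cdot p_1^{*}(\alpha)|_V$ represents $\alpha$ under $\Theta \circ \partial^V_{\Delta(X')}$ by the identity above. Combining these two observations with Lemma \ref{cycle-prop}, which allows me to commute the Gersten differential past $p_i^{*}$ and past multiplication by $[t]$ (up to sign twists), I expect to express $\alpha$ as $d_{X'}(\beta)$ for an explicit $\beta \in C^{p-1}(X', M, \calL_{X'})$ built from the chain $[t] \cdot (p_1^{*} - p_2^{*})(\alpha)$ on $V$. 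This is the core of Rost's original argument and of \cite[Thm.~3.1]{lueders20}. The main obstacle is the careful bookkeeping of coefficients and signs required to turn the abstract identity of Lemma \ref{boundary} into a chain-level null-homotopy: the comparison between $p_1^{*}$ and $p_2^{*}$ on $V$ is where the MW-cycle module axioms must be invoked carefully, and in the DVR setting this is ultimately where property R5 enters, via the one-dimensional local acyclicity which underlies Lemma \ref{boundary} itself.
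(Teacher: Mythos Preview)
Your outline has the right overall shape but a genuine gap in the construction of the null-homotopy. You form $W = X \times_{\A^{d-1}_S} X$ and propose to build $\beta$ from $[t]\cdot(p_1^{*}-p_2^{*})(\alpha)$ on $V$; but there is no map $C^{p}(V)\to C^{p-1}(X')$ that could produce such a $\beta$. The identity of Lemma~\ref{boundary} says $\Theta\circ\partial\big([t]\cdot\tilde g^{*}\alpha\big)=\alpha$, yet $\Theta\circ\partial$ is a residue map, not the Gersten differential $d_{X'}$; it exhibits $\alpha$ as a residue of something on $V$, not as a coboundary in $C^{\bullet}(X')$. The paper instead forms $Z = Y'\times_{\A^{d-1}_S} X'$ (equivalently, your $p_2^{-1}(Y')$), so that the projection $p:Z\to X'$ is \emph{quasi-finite}, compactifies via Zariski's Main Theorem to a finite $\overline{p}:\overline{Z}\to X'$, and sets $H=\overline{p}_{*}\circ\overline{j}_{*}\circ j_{*}\circ[\overline{t}]\circ q^{*}$. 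It is the finite pushforward $\overline{p}_{*}$ that returns you to $C^{\bullet}(X')$ and makes the chain-homotopy relation $d_{X'}H-\epsilon H d_{Y'}=i_{*}$ possible. Your claim that $\pi|_Y$ can be made finite by shrinking $X$ is likewise unjustified: Gillet--Levine yields only quasi-finite, and over a DVR the ZMT compactification together with the Chinese-remainder extension of $t$ to $\overline{t}$ with $\overline{t}|_{\overline{Z}\setminus Z}=1$ is an essential part of the argument.

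Finally, property R5 plays no role in this proposition nor in Lemma~\ref{boundary}; both follow from the cycle-module axioms alone. R5 enters only in the proof of Theorem~\ref{main}, where it is used to move the support of a cycle into a closed subscheme that is flat over $S$---precisely the hypothesis you are already granted here.
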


\begin{proof} We follow Rost's proof of ~\cite[Proposition 6.4]{Rost}. 
By Lemma \ref{gillet-levine}, we have a Zariski neighbourhood $X'$ of $y$ and a morphism  $\pi: X'\to\A^{d-1}_S$ of relative dimension 1 which is smooth at $y$ such that  $\pi|_{Y'}: Y'\to \A^{d-1}_S$ is quasi-finite where $Y':= Y\times_XX'$ . Let $Z:=Y\times _{ \A^{d-1}_S} X'$ and $\sigma: Y'\to Z$ be the section of $Z\to  Y'$ induced by the morphism $Y'\to X'$. We have the commutative diagram
\begin{equation}\label{gabberpres}\xymatrix{
Y' \ar[rd]^-{\sigma}\ar@/_1pc/[rdd]_{~1_Y}\ar@/^1pc/[rrd]^{i}&        & & \\
   & Z    \ar[r]^{~p}\ar[d]^{q}  & X' \ar[d]_{\pi}\\
   & Y'    \ar[r]^{\pi|_{Y'}}  & \A^{d-1}_S\\
}\end{equation}

By Zariski's main theorem, we factorize the quasi-finite map $Z\xrightarrow{p} X'$ as an open immersion $Z\to \-Z$ followed by a finite map $\-p: \-Z\to X'$, so that we have the following commutative diagram 
%n \'etale morphism such that 
\begin{equation}\label{gabberpres}\xymatrix{
Y' \ar[rd]^{\sigma}\ar[rdd]_{~1_Y}\ar[rr]^{\-\sigma}&        &\-Z \ar[d]^{\-p}& D:=\-Z-Z\ar[l]_-{r}\\
   & Z    \ar[r]^{~p}\ar[d]^{q}\ar[ru]^{}  & X' \ar[d]_{\pi}\\
   & Y'    \ar[r]^{\pi|_{Y'}}  & \A^{d-1}_S\\
}\end{equation}
By passing to a smaller open subscheme of $X'$, we can assume that $\sigma(Y')\subset Z$ is given by a section $t\in \sO_{Z}(Z)$ and $D\cap \-\sigma(Y')=\emptyset$. As in the proof of~\cite[Theorem 4.1]{Geis04}, by Chinese remainder theorem, by further shrinking to a smaller open subscheme, we can find $\-t\in\sO(\-Z)$ such that $\-t|_{Z}=t$ and $\-t|_D=1$. 

Let $Q:=Z-\sigma(Y')$, $\-Q:=\-Z-\-\sigma(Y')$, $j: Q\to \-Q$ and $\-j: \-Q\to \-Z$. Consider the correspondence 
$$\xymatrix{ 
H: Y'\ar@{{*}->}[r]^{q^*} & Q\ar@{{*}->}[r]^{[\-t]} & Q\ar@{{*}->}[r]^{j_*} & \-Q\ar@{{*}->}[r]^{\-j_*} & \-Z\ar@{{*}->}[r]^{\-p_*}& X' \\
}$$
giving a homomorphism 
$$H: C^p_{Y'}(X', M, \calL_{X'})\to C^p(X', M, \calL_{X'}).$$
We claim that $H$ satisfies the following relation
\begin{equation}
\label{homMW}
d_{X'}\circ H- \epsilon H\circ d_{Y'}=i_*
\end{equation}
where $i:Y'\inj X'$ is the closed immersion and $\epsilon=-\<-1\>\in \KMW_0(S)$. 
\begin{align*}
d_{X'}\circ H&=d_{X'}\circ \-p_*\circ\-j_*\circ j_*\circ [\-t]\circ q^*\\
                 &\overset{(1)}= \-p_*\circ d_{\-Z}\circ\-j_*\circ  j_*\circ [\-t] \circ q^*  \\
                 &\overset{(2)}= \-p_*\circ(\sigma_*\circ \Theta\circ\partial^Q_{Y'}+{i_{\-Z-Z}}_*\circ \partial^{Q}_{\-Z-Z}+\-j_*\circ j_*\circ d_Q)\circ [\-t] \circ q^*\\
                 &\overset{(3)}= \-p_*\circ \sigma_*\circ\Theta\circ \partial^Q_{Y'}\circ [\-t] \circ q^*+ \-p_*\circ j_*\circ d_Q\circ[\-t] \circ q^*\\
                 &\overset{(4)}=\-p_*\circ \sigma_*\circ {1_{Y'}}_*+ \-p_*\circ \-j_*\circ j_*\circ (\epsilon[\-t]\circ d_Q) \circ q^*\\
                 &\overset{(5)}=i_*+\-p_*\circ \-j_*\circ j_*\circ(\epsilon[\-t])\circ q^*\circ d_{Y'}\\
                 &\overset{(6)}=i_*+ \epsilon \-p_*\circ \-j_*\circ j_*\circ [\-t]\circ q^*\circ d_{Y'}\\
                 &=i_*+ \epsilon H\circ d_{Y'}. 
\end{align*}
(1) follows since $\-p$ is finite, hence $d_{X'}\circ \-p_*=\-p_*\circ d_{\-Z}$ [see Lemma~\ref{cycle-prop}(1)].\\
%\cite[Prop. 4.6 (1), page 352]{Rost}.\\
(2) follows since $\sigma_*\circ\sigma^*+{i_{\-Z-Z}}_*\circ {i_{\-Z-Z}}^*+ \-j_*\circ \-j^*=1_{\-Z}$, by the argument similar to the one in \cite[(3.10), page 350]{Rost}. Thus  
$$\sigma_*\circ\sigma^*\circ d_{\-Z}\circ \-j_*\circ j_*+{i_{\-Z-Z}}_*\circ {i_{\-Z-Z}}^*\circ d_{\-Z}\circ \-j_*\circ j_*+ \-j_*\circ \-j^*\circ d_{\-Z}\circ\-j_*\circ j_*=d_{\-Z}\circ\-j_*\circ j_*.$$ Since $\Theta\circ\partial^Q_Y=\sigma^*\circ d_{\-Z}\circ  \-j_*\circ j_*$ (recall from Lemma \ref{boundary} the definition of $\Theta$), we have
$$d_Z\circ \-j_*\circ j_*=\sigma_*\circ \Theta\circ\partial^Q_{Y'}+{i_{\-Z-Z}}_*\circ \partial^{Q}_{\-Z-Z}+\-j_*\circ j_*\circ d_Q.$$\\
(3) follows since $\-t|_{\-Z-Z}=1$ implies $ \partial^{Q}_{\-Z-Z}\circ [\-t] =0$. \\
(4) First term in (4) follows by  Lemma~\ref{boundary} applied to triple $(Y'\xrightarrow{\sigma}Z\xleftarrow{j}Q).$ The second term follows since $d_Q\circ [t]=\epsilon[t]\circ d_Q$ \cite[Prop. 6.6 (3) page 22]{Feld}. \\ 
(5) follows since $\-p\circ \sigma=i$ and because $d_Q\circ q^* = q^*\circ d_Y'$ (see Lemma \ref{cycle-prop}(2)). \\ 
(6) follows since $\epsilon$ commutes with $p_*$ by projection formula~R2b
%\cite[Proposition 6.2]{Feld} 
and also commutes with $j_*$ and $\-j_*$.\\
The relation~\eqref{homMW} implies that $i_*: A^p_{Y\times_X X'}(X', M, \calL_{X'})\to A^p(X', M, \calL_{X'})$ is zero.
\end{proof}

We are now ready to prove Theorem \ref{main}. 
\begin{proof}[Proof of Theorem \ref{main}] We need to show that $$ A^p(X, M, \calL_{X})=0 \ 
\forall \ p\geq 1.$$The proof is an adaptation of the ideas in~\cite[Proposition 6.1]{Rost} and~\cite[Theorem 3.1]{lueders20}) with input from Proposition~\ref{locvanishforMW}.
For simplicity, we prove the theorem when $X$ is local. The proof of the semi-local case is very similar with just more notation. 

Let $X=U_u:=\Spec \mathcal{O}_{U, u}$ where $U$ is a smooth, irreducible, equi-dimensional $S$-scheme of dimension $d$ and a point $u$ in $U$ lying over the closed point $s$ in $S$. We have
$$C^p(X, M, \calL_X)=\dlim_{(U', u)} C^p(U', M, \calL_{U'})$$ where $(U', u)$ runs over Zariski neighbourhoods $U'$ of $u$ in $U$ and
$$C^p(U', M, \calL_{U'})=\dlim_{Y\subset U'} C^{p}_Y(U', M, \calL_{U'})$$
where $Y$ runs over closed subschemes of $U'$ of dimension $d-p$.
 
We have $$A^p(X, M, \calL_{X})=    \dlim_{(U', u)} A^p(U', M, \calL_{U'})    $$
where $(U', u)$ runs over Zariski neighbourhoods of $u$ in $U$.
Thus, every element of  $A^p(U', \KMW_n)$ is represented by an element in $C^{p}_Y(U', M, \calL_{U'})$
for some closed subscheme $Y$ of $U'$ of dimension $d-p$. 
 
%\begin{comment}

%\textbf{Case: $\dim S=1$} (Following Morten L\"uders' argument)

We claim that, every element of $A^{p}(U', M, \calL_{U'})$ is represented by an element of $C^{p}_Y(U', M, \calL_{U'})$ where $Y$ runs over all closed subschemes of $ U'$ of dimension $d-p$ and $Y$ is flat over $S$. 

Let $y\in Y^{(0)}$ such that $y\in U'_s$, the special fiber of $U'$ over the closed point $s$ of $S$. By  ~\cite[Lemma 7.2, page 1622]{SS}
there exists an integral closed subscheme $Z\subset U'$ of dimension $d-p+1$  such that 
\begin{enumerate}
\item $ Z\cap (U'-U_s)\neq \emptyset$
\item $y\in Z$
\item $Z$ is regular at $y$.
\end{enumerate}
Let $\alpha\in M(\kappa(y), \calL_{U'}).$ The set of all codimension 1 points of $Z$ is given by $Z^{(1)}=Z_s^{(1)}\coprod Z_{\eta}^{(1)}$ where  $Z_s^{(1)}:=\bigg\{\-{\{y\}}\bigg\}\cup  \bigg\{Z_1, Z_2,\cdots Z_m\bigg\}$ where $Z_i$'s are pairwise distinct irreducible components of the closed fiber $Z_s$ of $Z$ over the closed point $s$ of $S$ and not equal to $\-{\{y\}}$ and $Z_{\eta}^{(1)}$ is the collection of codimension 1 points of $Z$ not in $Z_{s}^{(1)}.$  

Let $z$ be the generic point of $Z$. Let $z_i$ be the generic points of $Z_i$ for $i=1,\cdots, m$. Let $J_i$ be the collection of discrete valuations on $\kappa(z)$ with center $z_i$ for all $i$. Let us denote the valuation on $\kappa(z)$ associated to $y$ by $v$. Observe that $\{v\}\cup \bigcup_i J_i$ is a finite collection of distinct non-equivalent discrete valuations on $\kappa(z)$. 

By Chinese remainder theorem, we can choose a uniformizer $\pi$ for $v$ such that $\pi+1\in \fr m_w$ for all $w\in \bigcup_i J_i$, where $\fr m_w$ is the maximal ideal for a discrete valuation $w$. Now, since $M$ satisfies the property R5, there is $\beta\in M(z, \sL_{U'})$ such that $\partial_v(\beta)=\alpha$  and for all $w\in \bigcup_i J_i$, $\partial_w(\beta)=0$. Hence, $\forall \ i,$ the map
$$\partial^z_{z_i}=\sum_{w\in J_i} (\phi^*)_{\kappa(w)/\kappa(z_i)}\circ \partial_{w}: M(z, \calL_{U'})\to  M(z_i, \calL_{U'})$$ takes $\beta\mapsto 0.$
%Let $\fr p_i$ (resp. $\fr p_y$) be the maximal ideals corresponding to $Z_i$ (resp. $Y$) in the semi-local ring $\sO_{Z, Z^{(1)}_s}$. 
%%By Chinese Reminder theorem 
%Since the ideals $\fr p_1,\cdots, \fr p_m, \fr p_y$ are coprime in $\sO_{Z, Z^{(1)}_s}$, the canonical map  $$\sO_{Z, Z^{(1)}_s}\to \sO_{Z, Z^{(1)}_s}/\fr p_y\times \prod_{i=1}^m \sO_{Z, Z^{(1)}_s}/\fr p_i$$ is surjective. Let $\alpha_i\in \sO_{Z, Z^{(1)}_s}$ such that $\alpha_i\mapsto \-\alpha_i$ in $\kappa(\-{\{y\}})^*$ and 1 in  $\kappa(Z_i)^*$.
%Let $\pi$ be a local parameter of $Z$ at $y$. 
Then under the differential
$$d=\sum_{w\in Z^{(1)}}\partial^z_w: M(z, \calL_{U'})\to \Bigg(\bigoplus_{i=1}^m M(z_i, \calL_{U'})\Bigg) \oplus M(y, \calL_{U'})\oplus \Bigg(\bigoplus_{w\in Z_{\eta}^{(1)}} M(w, \calL_{U'})\Bigg)$$
$$d(\beta)=0+ \cdots+ 0+ \alpha+ \sum_{w\in Z_{\eta}^{(1)}} \partial^{z}_w(\beta).$$
We observe that $\sum_{w\in Z_{\eta}^{(1)}} \partial^{z}_w(\beta)\in C^p(U', M, \calL_{U'})$ is supported on $w$, where $\-{\{w\}}$ is a dimension $d-p$ closed subscheme and flat over $S$, since $w\in Z_{\eta}^{(1)}$. 
Thus, we have that, any element of $A^{p}(U', M, \calL_{U'})$ is represented by 
an element in $A^{p}_Y(U', M, \calL_{U'})$
for some closed subscheme $Y\subset U'$ of codimension $p$ such that $Y$ is flat over $S$.
Now  $$A^{p}_Y(U',M, \calL_{U'})\to A^{p}(U', M, \calL_{U'}) \to A^{p}(U'', M, \calL_{U''})$$ is trivial for some $(U'', u)$ Zariski neighbourhood of $u$ in $U'$, since it factors through $$A^{p}_{Y}(U', M, \calL_{U'})\to A^{p}_{Y\times_{U'} U''}(U'', M, \calL_{U''})\to A^{p}(U'', M, \calL_{U''})$$
which is trivial by Proposition~\ref{locvanishforMW}.
\end{proof}

%\begin{remark} For $S$ the spectrum of an excellent DVR, by the remark~\ref{trivoverdvr}, we can show that for essentially smooth local scheme $X$ over $S$, $ A^p(\KMW_n)(X)=0$
%for $p\geq 1$ and all $n$. This proves the analogue of~\cite[Theorem 3.1]{lueders20}, for Milnor-Witt K-theory.
%\end{remark}
%We adapt the above argument over $S$ with $\dim S=1$ to $\dim S>1$.
%\end{comment}
\section{Strict $\A^1$-invariance}
Throughout this section, we let $S$ be as in Theorem \ref{main}. The goal of this section is to give a slightly modified construction of a spectral sequence constructed in \cite[9.2]{Feld}. We show that this spectral sequence can easily be constructed using a filtration on the Gersten complex which makes it clear that the perfectness of the field is not a necessary hypothesis as might appear while reading the proof of \cite[9.3]{Feld}. In fact, the spectral sequence exists over any regular excellent base scheme $S$. As in \cite[9.1]{Feld}, this spectral sequence will deduce the proof of Theorem \ref{a1inv} from the field case. 

We first recall the definition of the sheaf theoretic version of the Gersten complex in Definition~\ref{cyclecomplex}. For a smooth scheme $f: X\to S$ and a graded line bundle $\sL$ on $X$, we let $$\calC^q(X, M, \calL)=\bigoplus_{x\in X^{(q)}} {i_x}_*M(x, \calL).$$ 
where $i_x:\Spec(\kappa(x))\to X$ is the inclusion of the point $x\in X$ and the group $M(x,\sL_X)$ is considered as a Nisnevich sheaf on $\Spec(\kappa(x))$. Note that the Gersten complex $C^{\bullet}(X,M,\sL)$ in Definition~\ref{cyclecomplex} is obtained by applying global sections functor to $\calC^{\bullet}(X, M,\calL)$. Moreover,~Theorem~\ref{main} implies that $\calC^{\bullet}(X, M, \sL)$ is an acyclic resolution of the sheaf $M(X,\sL)$ in the category of Nisnevich sheaves on $X$.  In particular, we have 
$$H^q_{\nis}(X, M(X,\sL)) \cong A^q(X, M, \sL) \ \ \forall\  q\geq 0.$$
One now defines a decreasing filtration on the complex $\sC^{\bullet}(X, M, \sL)$ by 
setting 
$$F^p\sC^{q}(X, M, \sL)=\bigoplus_{\substack{x\in X^{(q)}\\ f(x)\in S^{(\geq p)}}} M(x, \sL).$$
\begin{lemma}\label{spectralseq}
We have an isomorphism of complexes
$$F^p\sC^{\bullet}(X,M,\sL)/F^{p+1}\sC^{\bullet}(X,M,\sL) =\bigoplus_{s\in S^{(p)}}C^{\Dot-p}(X_s, M, \calL_{X_s}).$$
Hence, we obtain the spectral sequence
$$E_1^{pq}=\bigoplus_{s\in S^{(p)}}A^q(X_s, M, \calL_{X_s})\Longrightarrow A^{p+q}(X, M, \calL).$$
\end{lemma}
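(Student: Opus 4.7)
The proof naturally splits into two parts: identifying the associated graded of the filtration with the claimed direct sum, and then invoking the general spectral sequence of a filtered cochain complex.

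\textbf{Step 1.} First verify that $F^{\bullet}\sC^{\bullet}(X,M,\sL)$ is a decreasing filtration by subcomplexes. A component $\partial^x_y$ of the Gersten differential is nonzero only when $y$ is a codimension-one specialisation of $x$ in $X$, in which case $f(y)$ lies in the closure of $f(x)$ in $S$, so $\codim_S\overline{\{f(y)\}}\geq\codim_S\overline{\{f(x)\}}$. Hence $f(x)\in S^{(\geq p)}$ implies $f(y)\in S^{(\geq p)}$, so $d$ preserves $F^p$.

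\textbf{Step 2.} Identify the graded pieces. Because $f:X\to S$ is smooth (hence flat with regular fibres), for any $s\in S^{(p)}$ the fibre $X_s$ is pure of codimension $p$ in $X$, and every $x\in X_s$ satisfies $\codim_X(x)=p+\codim_{X_s}(x)$. Therefore the quotient $F^p\sC^{p+q}/F^{p+1}\sC^{p+q}$ is exactly $\bigoplus_{s\in S^{(p)}}\bigoplus_{x\in X_s^{(q)}} M(x,\sL)$, which in total degree $p+q$ matches $\bigoplus_{s\in S^{(p)}}C^{q}(X_s,M,\sL_{X_s})$, i.e.\ the degree $(p+q)-p$ part of the shifted complex $\bigoplus_{s\in S^{(p)}}C^{\bullet-p}(X_s,M,\sL_{X_s})$. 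On the quotient, only components $\partial^x_y$ with $f(x)=f(y)=s$ survive (others land in $F^{p+1}$), and these coincide with the intrinsic Gersten differential of $X_s$ at the points $x,y\in X_s$ by the definition of the residue maps. The identification of coefficients $M(x,\sL_X)\cong M(x,\sL_{X_s})$ is built into the convention $\Lambda_x=(\omega_{\kappa(x)/S},\dim\overline{\{x\}})$ and its fibrewise analogue $\Lambda_x^{X_s}$: the canonical bundles and the dimension shifts are compatible with restriction to the fibre under the setup of Section~2.

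\textbf{Step 3.} Taking global sections one obtains a filtered cochain complex $(C^{\bullet}(X,M,\sL), F^{\bullet})$ with the same graded pieces. The filtration is bounded term by term: $F^0 C^q=C^q$ and $F^pC^q=0$ for $p>\min(q,\dim S)$. The standard spectral sequence of a filtered complex therefore converges strongly, with $E_1^{pq}=H^{p+q}(F^pC^{\bullet}/F^{p+1}C^{\bullet})$ equal to $\bigoplus_{s\in S^{(p)}} A^q(X_s,M,\sL_{X_s})$ by Step~2, and abutment $A^{p+q}(X,M,\sL)$.

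\textbf{Expected main obstacle.} The only non-routine point is the coefficient identification in Step~2: one must verify that the graded line bundle data $\Lambda_x\otimes\sL_X$ used for $x$ as a point of $X$ is canonically isomorphic to the corresponding data $\Lambda_x^{X_s}\otimes\sL_{X_s}$ when $x$ is viewed as a point of $X_s$, so that the Gersten differentials really do match on the nose (not just up to a fixed twist). This is a bookkeeping verification about graded Picard groups via the compatibility of canonical sheaves $\omega_{\kappa(x)/S}$ and $\omega_{\kappa(x)/\kappa(s)}$ with the dimension shifts, and it is the only place where the conventions of the paper need to be unpacked carefully, especially since the base $S$ is not a field.
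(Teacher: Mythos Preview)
Your argument follows essentially the same route as the paper's own proof: identify the associated graded termwise as a direct sum over fibres, then observe that the only surviving differentials are those between points with the same image in $S$ (since a strict specialisation in $S$ increases codimension), and finally invoke the standard spectral sequence of a bounded filtered complex. If anything, you are more careful than the paper, which does not explicitly address the coefficient compatibility $M(x,\sL_X)\cong M(x,\sL_{X_s})$ that you flag as the main bookkeeping point.
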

\begin{proof} Let $X\xrightarrow{f}S$ denote the structure map. It is elementary to check that as groups we have an isomorphism of groups
$$F^p\sC^{\bullet}(X,M,\sL)/F^{p+1}\sC^{\bullet}(X,M,\sL) =\bigoplus_{s\in S^{(p)}}C^{q-p}(X_s, M, \calL_{X_s})$$
since both are isomorphic to 
$$\bigoplus_{s\in S^{(p)}}\Bigg(\bigoplus_{x\in X_s^{(q-p)}} M(x, \calL_X)\Bigg)$$

To see that, this is indeed an isomorphism of complexes, we only need to check that there are no non-trivial boundary maps from $M(x,\sL)$ to $M(y,\sL)$ in $F^p(C^{\Dot})/F^{p+1}(C^{\Dot})$ unless $x$ and $y$ belong to the same fiber, i.e. we need to show that if $x$ specializes to $y$ then either 
$f(x)=f(y)$ or $f(y)\in S^{(\geq p+1)}$. This is clear, because $f(x)$ speciallizes to $f(y)$ and both cannot have the same codimension unless $f(x)=f(y)$. 
\end{proof}

\begin{proof}[Proof of Theorem \ref{a1inv}]
This is deduced from the spectral sequence \eqref{spectralseq} and the Proposition \ref{hwr}(H) as in \cite[Theorem 9.4]{Feld}.
\end{proof}

\section{Appendix}
In this appendix, we quickly recall facts of canonical sheaves \cite{liu} and fix some notation required in this paper. Nothing in this appendix is new. 

\subsection{Canonical invertible sheaf}\label{canonicalsheaf}
Let $S$ be a J-2 ~\cite[\href{https://stacks.math.columbia.edu/tag/07P7}{Tag 07P7}]{stacksproj}, Noetherian regular scheme of finite Krull dimension, in particular $S$ could be an excellent scheme as in the previous sections. In \cite[Chapter 6, Definition 4.3]{liu} canonical sheaf is defined for all l.c.i morphisms. For a point $x\in X$, where $X/S$ is of finite type, we wish to define $\omega_{x/S}$. 

\begin{definition}\label{canonicalsheaf}
Consider $\-{\{x\}}$ the closure of $x$ in $X$ with the reduced induced structure. Now $\-{\{x\}}$ being a reduced, finite type scheme over $S$, the regular locus $U_x:={\rm Reg}(\-{\{x\}})$ is open and non-empty. The morphism $U_x\to S$ is l.c.i. over $S$, since it is of locally finite type morphism of regular schemes. There is a canonical invertible sheaf $\w_{{U_x}/S}$ on $U_x$ as  defined in ~\cite[Chapter 6, Definition 4.3]{liu}. We define $$\w_{x/S}:= (\w_{{U_x}/S})_x\otimes_{\sO_x} \kappa(x).$$
We leave it to the reader to check that this  $\w_{x/S}$ depends only on the $S$-field $\kappa(x)$.
Thus, for any finitely generated $S$-field $F$, we have a well defined canonical sheaf $\omega_{F/S}$. 
\end{definition}

\begin{remark}\label{rmkcanonicalsheaf}
We note various properties of the canonical sheaf.
\begin{enumerate}
\item\label{rmklci} If the given morphism $X\to S$ is l.c.i., then for any regular point $x\in X$  we have $$\w_{x/S}\simeq (\det\frak m_x/\frak m_x^2)^{\vee}\otimes_{\kappa(x)} ((\w_{X/S})_x\otimes_{\sO_{X, x}} \kappa(x)).$$
\item When $S=\Spec(k)$ is a field, then for any finitely generated field extension $F/k$, $\omega_{F/k} \cong \det \Omega_{F/k}$.
\item \label{basechange}Given field extensions $F\supset K$ of finitely generated $S$-fields, there is a canonical isomorphism $$\omega_{F/S} \simeq \omega_{F/K} \tensor \omega_{K/S}.$$
\end{enumerate}
\end{remark}

\subsection{Graded line bundles} 
We recall the notation from~\cite[section 1.3]{Fas20}. Let $X$ be a connected scheme. The category $\calG(X)$ of graded line bundles on $X$ is defined as follows. The objects are a pair $\sL:=(L, a)$ for a line bundle $L$ on $X$ and an integer $a$. The morphisms are given as follows
\begin{align}
\Hom_{\calG(X)}((L, a), (L', a'))=\begin{cases}
        \emptyset & \text{if $a\neq a'$}\\
        \Hom_{\sO_X}(L, L') &\text{if $a= a'$}
\end{cases}
\end{align}
$\calG(X)$ is a symmetric monoidal category with monoidal structure given as follows:
$$(L, a)\otimes (L', a')=(L\otimes L', a+a').$$ 
$\calG(X)$ is a Picard category in the sense of~\cite[\S 4]{Del87}.

\end{document}